\newtheorem{theorem}{Theorem}[section]
\newtheorem{lemma}[theorem]{Lemma}
\newtheorem{proposition}[theorem]{Proposition}
\newtheorem{corollary}[theorem]{Corollary}
\theoremstyle{definition}
\newtheorem{example}[theorem]{Example}
\theoremstyle{remark}
\newtheorem{remark}[theorem]{Remark}
\begin{document}

\title[Involutions on the product of two projective spaces]{Orbit spaces of free involutions on the product of two projective spaces}

\author{Mahender Singh}
\address{Institute of Mathematical Sciences\\ C I T Campus\\ Taramani\\ Chennai 600113\\ India}
\email{mahender@imsc.res.in}
\subjclass{Primary 57S17; Secondary 55R20, 55M20}
\keywords{Cohomology algebra, finitistic space, Index of involution, Leray spectral sequence}

\begin{abstract}
Let $X$ be a finitistic space having the mod 2 cohomology algebra of the product of two projective spaces. We study free involutions on $X$ and determine the possible mod 2 cohomology algebra of orbit space of any free involution, using the Leray spectral sequence associated to the Borel fibration $X \hookrightarrow X_{\mathbb{Z}_2} \longrightarrow B_{\mathbb{Z}_2}$. We also give an application of our result to show that if $X$ has the mod 2 cohomology algebra of the product of two real projective spaces (respectively complex projective spaces), then there does not exist any $\mathbb{Z}_2$-equivariant map from $\mathbb{S}^k \to X$ for $k \geq 2$ (respectively $k \geq 3$), where $\mathbb{S}^k$ is equipped with the antipodal involution.
\end{abstract}

\maketitle

\section{Introduction}
A finitistic space is a paracompact Hausdorff space whose every open covering has a finite dimensional open refinement, where the dimension of a covering is one less than the maximum number of members of the covering which intersect non-trivially. This notion was introduced by Swan \cite{Swan} in his study of fixed point theory. It is a large class of spaces including all compact Hausdorff spaces and all paracompact spaces of finite covering dimension.

It is well known that the class of finitistic spaces is the most suitable for studying topological transformation groups. An excellent account of results in this direction can be seen in \cite{Allday, Bredon2}. Finitistic spaces behave nicely under group actions. More precisely, if $G$ is a compact Lie group acting continuously on a space $X$, then the space $X$ is finitistic if and only if the orbit space $X/G$ is finitistic \cite{Deo2, Deo4}.

Let $G$ be a group acting continuously on a space $X$. Determining the orbit space up to topological or homotopy type is often difficult and hence we try to determine its (co)homological type. For spheres the orbit spaces of free actions of finite groups have been studied extensively by Livesay \cite{Livesay}, Rice \cite{Rice}, Ritter \cite{Ritter}, Rubinstein \cite{Rubinstein} and many others. However, very little is known if the space is a compact manifold other than a sphere. Myers \cite{Myers} determined the orbit spaces of free involutions on three dimensional lens spaces. Tao \cite{Tao} determined the orbit spaces of free involutions on $\mathbb{S}^1 \times \mathbb{S}^2$. Ritter \cite{Ritter2} extended the results of Tao to free actions of cyclic groups of order $2^n$. Recently Dotzel and others \cite{Dotzel2} determined completely the cohomology algebra of orbit spaces of free $\mathbb{Z}_p$ ($p$ prime) and $\mathbb{S}^1$ actions on cohomology product of two spheres. This paper is concerned with the orbit spaces of free involutions on finitistic spaces $X$ having the mod 2 cohomology algebra of the product of two projective spaces.

We write $X \simeq_2 \mathbb{R}P^n \times \mathbb{R}P^m$ if $X$ is a space having the mod 2 cohomology algebra of $\mathbb{R}P^n \times \mathbb{R}P^m$. Similarly, we write $X \simeq_2 \mathbb{C}P^n \times \mathbb{C}P^m$ if $X$ is a space having the mod 2 cohomology algebra of $\mathbb{C}P^n \times \mathbb{C}P^m$. The spaces $\mathbb{R}P^n \times \mathbb{R}P^m$ and $\mathbb{C}P^n \times \mathbb{C}P^m$ are compact Hausdorff spaces and hence are finitistic. Involutions on spaces $X \simeq_2 \mathbb{R}P^n \times \mathbb{R}P^m$ have been studied by Chang and Su \cite{Chang}, where the cohomology structures of the fixed point sets were determined. We study free involutions on such spaces and determine the possible mod 2 cohomology algebra of orbit spaces. If $X/G$ denotes the orbit space, then we obtain the following results.

\begin{theorem}
Let $G=\mathbb{Z}_2$ act freely on a finitistic space $X \simeq_2 \mathbb{R}P^n \times \mathbb{R}P^m$, $1 \leq n \leq m$. Then $H^*(X/G; \mathbb{Z}_2)$ is isomorphic to one of the following graded algebras:
\begin{enumerate}
\item $\mathbb{Z}_2[x,y,z]/ \langle x^2, y^{\frac{n+1}{2}}, z^{m+1} \rangle,$\\
where $deg(x)=1$, $deg(y)=2$, $deg(z)=1$ and $n$ is odd.
\item$\mathbb{Z}_2[x,y,z]/ \langle x^2, y^{n+1}, z^{\frac{m+1}{2}} \rangle,$\\
where $deg(x)=1$, $deg(y)=1$, $deg(z)=2$ and $m$ is odd.
\item$\mathbb{Z}_2[x,y,z,w]/ \langle x^2, y^{\frac{n+1}{2}}, z^{\frac{m+1}{2}}, w^2- \alpha xw - \beta y - \gamma z \rangle,$\\
where $deg(x)=1$, $deg(y)=2$, $deg(z)=2$, $deg(w)=1$ and $\alpha, \beta, \gamma \in \mathbb{Z}_2$ and $n$, $m$ are odd.
\end{enumerate}
\end{theorem}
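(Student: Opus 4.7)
The plan is to compute $H^*(X/G;\mathbb{Z}_2)$ via the Leray--Serre spectral sequence of the Borel fibration $X \hookrightarrow X_{\mathbb{Z}_2} \to B_{\mathbb{Z}_2}$. Since $G$ acts freely, $X_{\mathbb{Z}_2}$ is homotopy equivalent to $X/G$, so the spectral sequence converges to $H^*(X/G;\mathbb{Z}_2)$. With $\mathbb{Z}_2$ coefficients one has $H^*(B_{\mathbb{Z}_2};\mathbb{Z}_2) = \mathbb{Z}_2[t]$ with $\deg t = 1$ and $H^*(X;\mathbb{Z}_2) = \mathbb{Z}_2[a,b]/\langle a^{n+1}, b^{m+1}\rangle$ with $\deg a = \deg b = 1$. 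A preliminary step is to verify that the induced $G$-action on $H^*(X;\mathbb{Z}_2)$ is trivial so that the $E_2$-page is the simple tensor product
\[
E_2^{p,q} = \mathbb{Z}_2[t] \otimes \mathbb{Z}_2[a,b]/\langle a^{n+1}, b^{m+1}\rangle.
\]

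A bidegree check then shows that $d_r$ vanishes on each of the algebra generators $a, b, t$ for all $r \geq 3$, so by the Leibniz rule the entire spectral sequence is controlled by the pair $(d_2(a), d_2(b)) \in (E_2^{2,0})^{\oplus 2} = \mathbb{Z}_2\langle t^2\rangle^{\oplus 2}$. The configuration $(0,0)$ would make $E_\infty = E_2$ infinite, contradicting finiteness of $H^*(X/G;\mathbb{Z}_2)$ (inherited from the double cover $X \to X/G$). Hence at least one of $d_2(a), d_2(b)$ equals $t^2$, leaving exactly three admissible configurations.

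I would then treat the three configurations in turn, viewing $d_2$ as a $\mathbb{Z}_2$-linear derivation on $E_2$. In the case $d_2(a) = t^2$, $d_2(b) = 0$, the derivation equals $t^2\,\partial/\partial a$, and a direct computation of its kernel and image shows that $E_\infty$ is finite exactly when $n$ is odd, in which case $E_\infty = \mathbb{Z}_2[t, a^2, b]/\langle t^2, a^{n+1}, b^{m+1}\rangle$; relabeling $x = t$, $y = a^2$, $z = b$ produces part (1). The configuration $(0, t^2)$ is symmetric and gives part (2). For $(t^2, t^2)$, changing variables to $u = a$, $w = a + b$ converts $d_2$ into $t^2\,\partial/\partial u$; the kernel/image analysis now forces both $n$ and $m$ to be odd, and $E_\infty$ acquires an extra degree-one survivor coming from $a+b$, producing the fourth generator $w$ of part (3).

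The main obstacle is the final extension problem of promoting the $E_\infty$-description to $H^*(X/G;\mathbb{Z}_2)$. In cases (1) and (2) the filtration is thin enough that most relations are automatic on filtration grounds (for instance $x^2 \in F^3 H^2 = 0$), and the remaining potential corrections can be absorbed by adjusting the chosen lifts inside the next piece of the filtration. In case (3) this is no longer possible: the degree-one part of $E_\infty$ is two-dimensional, and the square of a degree-one lift $w$ genuinely lies in a filtration piece spanned by $xw$ together with the two degree-two survivors $y, z$. This forces the relation $w^2 = \alpha xw + \beta y + \gamma z$ with $\alpha, \beta, \gamma \in \mathbb{Z}_2$ that cannot in general be simultaneously normalized away by a change of lifts; bookkeeping this extension and checking that no further hidden relations appear is where the argument requires the most care.
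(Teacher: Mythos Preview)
Your plan matches the paper's: Borel fibration spectral sequence, triviality of the induced action on $H^*(X)$, the three cases for $(d_2(a),d_2(b))$, explicit computation of $E_3=E_\infty$, then the extension problem; your derivation notation $t^2\,\partial/\partial a$ and the change of variables $(u,w)=(a,a+b)$ in case~(iii) are tidy variants of the paper's direct rank-counting of $\ker d_2$ and $\operatorname{im} d_2$ on each $H^l(X)$.

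One sentence needs repair. The claim that ``a bidegree check shows $d_r$ vanishes on $a,b,t$ for $r\ge 3$, so by Leibniz the spectral sequence is controlled by $(d_2(a),d_2(b))$'' is not a valid deduction. If $d_2(a)=t^2$ then $a$ does not survive to $E_3$; the algebra $E_3$ is generated by $t,a^2,b$, and the Leibniz rule on $E_3$ must be applied to \emph{those} generators. In particular $d_3(a^2)$ lands in $E_3^{3,0}$, which is not zero for bidegree reasons alone. The correct argument---which your kernel/image computation in fact supplies, and which the paper states explicitly---is that $E_3^{p,*}=0$ for all $p\ge 2$ (equivalently $t^2=0$ in $E_3$), so every higher differential has zero target. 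Replace the bidegree sentence with this observation and the argument is complete.
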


\begin{theorem}
Let $G=\mathbb{Z}_2$ act freely on a finitistic space $X \simeq_2 \mathbb{C}P^n \times \mathbb{C}P^m$, $1 \leq n \leq m$. Then $H^*(X/G; \mathbb{Z}_2)$ is isomorphic to one of the following graded algebras:
\begin{enumerate}
\item $\mathbb{Z}_2[x,y,z]/ \langle x^3, y^{\frac{n+1}{2}}, z^{m+1} \rangle,$\\
where $deg(x)=1$, $deg(y)=4$, $deg(z)=2$ and $n$ is odd.
\item$\mathbb{Z}_2[x,y,z]/ \langle x^3, y^{n+1}, z^{\frac{m+1}{2}} \rangle,$\\
where $deg(x)=1$, $deg(y)=2$, $deg(z)=4$ and $m$ is odd.
\item$\mathbb{Z}_2[x,y,z,w]/ \langle x^3, y^{\frac{n+1}{2}}, z^{\frac{m+1}{2}}, w^2- \alpha x^2w - \beta y - \gamma z \rangle,$\\
where $deg(x) = 1$, $deg(y)=4$, $deg(z)=4$, $deg(w)=2$ and $\alpha, \beta, \gamma \in \mathbb{Z}_2$ and $n$, $m$ are odd.
\end{enumerate}
\end{theorem}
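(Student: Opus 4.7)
My plan is to analyze the Leray spectral sequence of the Borel fibration $X \hookrightarrow X_{\mathbb{Z}_2} \to B_{\mathbb{Z}_2}$ with $\mathbb{Z}_2$ coefficients. Freeness of the action gives a homotopy equivalence $X_{\mathbb{Z}_2} \simeq X/G$, so it suffices to compute $H^*(X_{\mathbb{Z}_2};\mathbb{Z}_2)$. The base ring is $H^*(B_{\mathbb{Z}_2};\mathbb{Z}_2) = \mathbb{Z}_2[t]$ with $|t|=1$, and the fiber ring is $H^*(X;\mathbb{Z}_2) \cong \mathbb{Z}_2[a,b]/\langle a^{n+1},b^{m+1}\rangle$ with $|a|=|b|=2$. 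I would first pin down the possible induced $\mathbb{Z}_2$-actions on $H^*(X;\mathbb{Z}_2)$ by classifying degree-preserving order-two ring automorphisms; this fixes the local coefficient system on the $E_2$-page.

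By bidegree the only potentially nonzero differential out of the fiber generators is $d_3 : E_3^{0,2} \to E_3^{3,0}$, so $d_3(a) = \lambda_a t^3$ and $d_3(b) = \lambda_b t^3$ for some $\lambda_a, \lambda_b \in \mathbb{Z}_2$. Since $X/G$ has finite mod $2$ cohomological dimension, the tower $1,t,t^2,\ldots$ on the base row must be truncated, which rules out $(\lambda_a,\lambda_b) = (0,0)$ and leaves the three sub-cases $(1,0)$, $(0,1)$, $(1,1)$ corresponding to parts (1), (2), (3) of the theorem. In the first two sub-cases, one of $a,b$ survives on $E_\infty$ along with the square of the killed generator (a $d_3$-cycle in characteristic two, by the derivation property) and with $x = t$ satisfying $x^3 = 0$; the oddness hypothesis on $n$ or $m$ ensures that the top power of the squared generator is zero. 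In the sub-case $(1,1)$ both $a^2$ and $b^2$ survive, as does $w := a+b$ in bidegree $(0,2)$ since $d_3(w) = 2t^3 = 0$, yielding four generators $x,y,z,w$ of degrees $1,4,4,2$ with relations $x^3 = y^{(n+1)/2} = z^{(m+1)/2} = 0$ on the associated graded.

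The main obstacle is the multiplicative extension problem determining $w^2$ in case (3). In total degree $4$ the $E_\infty$-page is supported in bidegrees $(0,4)$ and $(2,2)$, with basis classes $y, z$, and $x^2 w$, while the $(4,0)$ piece vanishes because $t^4 = t \cdot d_3(a)$ is a boundary. Consequently $w^2 = \alpha x^2 w + \beta y + \gamma z$ for some $\alpha,\beta,\gamma \in \mathbb{Z}_2$ that the spectral sequence does not pin down, matching the form asserted in part (3). The analogous extension problems in parts (1) and (2) collapse because the relevant filtration quotients in the square's total degree are trivial. Verifying these claims in detail, and confirming that the swap action on cohomology available when $n=m$ falls within the same classification (with specific values of $\alpha,\beta,\gamma$), would complete the proof.
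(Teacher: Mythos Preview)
Your overall strategy---run the Leray--Serre spectral sequence of the Borel fibration, identify the three possible values of $(d_3(a),d_3(b))$, and read off the ring structure from $E_\infty$---is exactly the paper's approach, and your treatment of the extension problem in case~(3) is on target. But there is one genuine gap.

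You plan to ``pin down the possible induced $\mathbb{Z}_2$-actions on $H^*(X;\mathbb{Z}_2)$ by classifying degree-preserving order-two ring automorphisms,'' and then at the end you propose to check that the swap $a\leftrightarrow b$ (available when $n=m$) ``falls within the same classification.'' This is where the argument breaks down. Classifying automorphisms does not by itself tell you which ones can be induced by a \emph{free} involution; and if the swap did occur, the local coefficient system would be nontrivial, so $E_2^{k,l}$ would no longer be the tensor product $H^k(B_G)\otimes H^l(X)$ on which your differential computation rests. (For instance, $H^2(X)$ with the swap is the regular $\mathbb{Z}_2[\mathbb{Z}_2]$-module, so $E_2^{k,2}=0$ for $k>0$, completely unlike the untwisted page.) The paper handles this not by a separate spectral-sequence analysis but by ruling the swap out \emph{a priori}: if $T^*(a)\neq a$ when $n=m$, then $c=a^n\in H^{2n}(X)$ satisfies $c\,T^*(c)=a^n b^n\neq 0$, and a theorem of Bredon (the paper's Proposition~3.6) then forces $T$ to have a fixed point, contradicting freeness. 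Thus the induced action on $H^*(X)$ is trivial, and only then does the untwisted $E_2$-page apply. You need this step (or an equivalent one) before the rest of your argument goes through.

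A smaller point: you refer to ``the oddness hypothesis on $n$ or $m$,'' but oddness is not a hypothesis of the theorem---it is a conclusion that must be derived in each case. In case $(\lambda_a,\lambda_b)=(1,0)$, for example, if $n$ were even then the derivation rule gives $d_3(1\otimes a^{n+1})=t^3\otimes a^{n}\neq 0$, contradicting $a^{n+1}=0$; hence $n$ is odd. The analogous argument forces $m$ odd in case $(0,1)$ and both odd in case $(1,1)$. You should make this explicit.
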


\section{Free involutions on the product of two projective spaces}
Recall that an involution on a space $X$ is a continuous action of the group $G= \mathbb{Z}_2$ on $X$. We note that the odd dimensional real projective space admits a free involution. Let $m=2n-1$ with $n \geq 1$. Recall that $\mathbb{R}P^m$ is the orbit space of the antipodal involution on $\mathbb{S}^m$ given by $$(x_1,x_2,..., x_{2n-1},x_{2n}) \mapsto (-x_1,-x_2,..., -x_{2n-1},-x_{2n}).$$ If we denote an element of $\mathbb{R}P^m$ by $[x_1,x_2,..., x_{2n-1},x_{2n}]$, then the map $\mathbb{R}P^m \to \mathbb{R}P^m$ given by $$[x_1,x_2,..., x_{2n-1},x_{2n}] \mapsto [-x_2,x_1,..., -x_{2n},x_{2n-1}]$$ defines an involution. It is easy to check that the involution is free.

Similarly, the complex projective space $\mathbb{C}P^m$ admits a free involution when $m \geq 1$ is odd. Recall that $\mathbb{C}P^m$ is the orbit space of the free $\mathbb{S}^1$-action on $\mathbb{S}^{2m+1}$ given by $$(z_1,z_2,..., z_m,z_{m+1}) \mapsto (\zeta z_1, \zeta z_2,..., \zeta z_m,\zeta z_{m+1}) ~ \textrm{for}~ \zeta \in \mathbb{S}^1.$$ Denote an element of $\mathbb{C}P^m$ by $[z_1,z_2,..., z_m,z_{m+1}]$. Then the map $$[z_1,z_2,..., z_m,z_{m+1}] \mapsto [-\overline{z}_2,\overline{z}_1,..., -\overline{z}_{m+1},\overline{z}_m]$$ defines an involution. It is easy to see that the involution is free.

Taking a free involution on $\mathbb{R}P^m$ for $m$ odd (respectively $\mathbb{C}P^m$ for $m$ odd) and any involution on $\mathbb{R}P^n$ (respectively $\mathbb{C}P^n$), the diagonal action gives a free involution on $\mathbb{R}P^n \times \mathbb{R}P^m$ (respectively $\mathbb{C}P^n \times \mathbb{C}P^m$).

\section{Preliminaries for proofs of theorems}
In this section, we recall some facts that we will be using in the paper without mentioning explicitly. For details of the content in this section we refer to \cite{Bredon2, Mccleary}. Throughout we will use \v{C}ech cohomology with $\mathbb{Z}_2$ coefficients and we will suppress it from the cohomology notation. Let the group $G= \mathbb{Z}_2$ act on a space $X$. Let $$G \hookrightarrow E_G \longrightarrow B_G$$ be the universal principal $G$-bundle. Consider the diagonal action of $G$ on $X \times E_G$. Let $$X_G=(X \times E_G) /G$$ be the orbit space of the diagonal action on $X \times E_G$. Then the projection $X \times E_G \to E_G$ is $G$-equivariant and gives a fibration $$X\hookrightarrow X_G \longrightarrow B_G$$ called the Borel fibration \cite[Chapter IV]{Borel3}. Recall that, $$H^*(B_G; \mathbb{Z}_2)= \mathbb{Z}_2[t],$$ where $t$ is a homogeneous element of degree 1. We will exploit the Leray spectral sequence associated to the Borel fibration $X \hookrightarrow X_G \longrightarrow B_G$.

\begin{proposition}\cite[Theorem 5.2]{Mccleary}
Let $X\stackrel{i}{\hookrightarrow} X_G \stackrel{\rho}{\longrightarrow} B_G$ be the Borel fibration. Then there is a first quadrant spectral sequence of algebras $\{E_r^{*,*}, d_r \}$, converging to $H^*(X_G)$ as an algebra, with $$ E_2^{k,l}= H^k(B_G; \mathcal{H}^l(X)),$$ the cohomology of the base $B_G$ with local coefficients in the cohomology of the fiber of $\rho$.
\end{proposition}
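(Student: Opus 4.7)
\emph{Plan of proof.} Since the statement is the standard Leray--Serre spectral sequence for a fibration, together with its natural multiplicative structure, I would reproduce the classical construction via a skeletal filtration of the base and then extract the spectral sequence of a filtered differential graded algebra. The whole plan rests on choosing a convenient CW model for $B_G$ and pulling its skeletal filtration back through $\rho$ to $X_G$.

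First, I would realize $B_G$ as a CW complex; for $G = \mathbb{Z}_2$ one may take $\mathbb{R}P^{\infty}$, whose $p$-skeleton is $\mathbb{R}P^p$. Writing $B_G^{(p)}$ for the $p$-skeleton and $X_G^{(p)} = \rho^{-1}\bigl(B_G^{(p)}\bigr)$, I obtain an increasing filtration $\emptyset = X_G^{(-1)} \subset X_G^{(0)} \subset X_G^{(1)} \subset \cdots$ of $X_G$ by closed subspaces. The long exact cohomology sequences of the pairs $\bigl(X_G^{(p)}, X_G^{(p-1)}\bigr)$ assemble into an exact couple, yielding a first-quadrant cohomology spectral sequence that abuts to $H^*(X_G)$.

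Next, I would identify the $E_2$ page. Excision combined with the local triviality of the Borel construction reduces
$$E_1^{p,q} = H^{p+q}\bigl(X_G^{(p)}, X_G^{(p-1)}\bigr)$$
to cellular cochains on $B_G$ with values in $H^q(X)$, twisted by the monodromy of $\rho$. The differential $d_1$ becomes the cellular coboundary with local coefficients, so $E_2^{p,q} = H^p\bigl(B_G; \mathcal{H}^q(X)\bigr)$, where $\mathcal{H}^q(X)$ is the local coefficient system determined by the $\pi_1(B_G) = G$-action on $H^q(X)$. The algebra structure comes from a cellular diagonal approximation on the filtered space that preserves the filtration; this induces compatible products on every page, converging to the cup product on $H^*(X_G)$.

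The principal obstacle is that the cohomology theory used in the paper is \v{C}ech, while the cleanest skeletal argument above is naturally phrased in singular terms. To bridge this gap I would invoke the sheaf-theoretic Leray spectral sequence of the continuous map $\rho$, with $E_2^{p,q} = H^p\bigl(B_G; R^q\rho_*\mathbb{Z}_2\bigr)$, and identify the stalks $(R^q\rho_*\mathbb{Z}_2)_b$ with $H^q(\rho^{-1}(b)) \cong H^q(X)$ by using that $E_G \to B_G$ is locally trivial with contractible total space. Comparing the two constructions, or alternatively invoking the fact that \v{C}ech and singular cohomology agree on the spaces under consideration (which are finitistic, so well behaved), then yields the stated isomorphism with its algebra structure.
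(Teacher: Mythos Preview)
The paper does not prove this proposition at all: it is stated with an explicit citation to \cite[Theorem 5.2]{Mccleary} and is used as a black box throughout. There is therefore nothing in the paper to compare your argument against beyond the reference itself.

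Your outline is a correct sketch of the standard Leray--Serre construction and would be an acceptable justification. One small caution: the remark that \v{C}ech and singular cohomology agree because the spaces are ``finitistic, so well behaved'' is not quite right as stated---finitistic alone does not guarantee this comparison. What actually saves you here is that $B_G = \mathbb{R}P^{\infty}$ is a CW complex and the Borel fibration is a genuine fiber bundle, so the sheaf-theoretic Leray spectral sequence you invoke in the last paragraph is the safer route and already gives exactly the $E_2$ term claimed, with \v{C}ech cohomology on both base and fiber. That version is what Bredon and McCleary effectively use, and it avoids any appeal to a singular--\v{C}ech comparison on $X$ or $X_G$.
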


\begin{proposition}\cite[Theorem 5.9]{Mccleary}
Let $X\stackrel{i}{\hookrightarrow} X_G \stackrel{\rho}{\longrightarrow} B_G$ be the Borel fibration. Suppose that the system of local coefficients on $B_G$ is simple, then the edge homomorphisms
{\setlength\arraycolsep{35pt}
\begin{eqnarray}
\lefteqn{ H^k(B_G)=E_2^{k,0} \longrightarrow E_3^{k,0}\longrightarrow \cdots  }
                    \nonumber\\
& &   \longrightarrow E_k^{k,0} \longrightarrow E_{k+1}^{k,0}=E_{\infty}^{k,0}\subset H^k(X_G) \nonumber
\end{eqnarray}}
and  $$H^l(X_G) \longrightarrow E_{\infty}^{0,l}= E_{l+1}^{0,l} \subset E_{l}^{0,l} \subset \cdots \subset E_2^{0,l}= H^l(X)$$
are the homomorphisms $$\rho^*: H^k(B_G) \to H^k(X_G) ~ ~ ~ \textrm{and} ~ ~ ~ i^*: H^l(X_G)  \to H^l(X).$$
\end{proposition}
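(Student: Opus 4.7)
The plan is to recover the statement from the canonical filtration construction of the Leray spectral sequence and then chase definitions. Take a CW model of $B_G$ (with a single $0$-cell, as for $B_{\mathbb{Z}_2} = \mathbb{R}P^\infty$) and form $F^p X_G = \rho^{-1}(B_G^{(p)})$; the exact couple built from the long exact sequences of the pairs $(F^p X_G, F^{p-1} X_G)$ yields the Leray spectral sequence under the paracompactness and fibration hypotheses, with induced filtration $F^p H^{p+q}(X_G) = \ker\bigl(H^{p+q}(X_G) \to H^{p+q}(F^{p-1} X_G)\bigr)$. The simple local coefficient hypothesis then gives the usual identification $E_2^{p,q} \cong H^p(B_G) \otimes H^q(X)$.

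For the base edge, I would show $\rho^*\alpha \in F^k H^k(X_G) = E_\infty^{k,0}$ for $\alpha \in H^k(B_G)$: a representative cocycle of $\alpha$ vanishes on chains in $B_G^{(k-1)}$, so its pullback $\rho^*\alpha$ vanishes on chains of $F^{k-1} X_G$. Tracing through the identification $E_2^{k,0} = H^k(B_G)$ then shows that the composite $H^k(B_G) = E_2^{k,0} \twoheadrightarrow \cdots \twoheadrightarrow E_\infty^{k,0} \hookrightarrow H^k(X_G)$ sends $\alpha \mapsto \rho^*\alpha$, which is exactly the asserted description of the base edge homomorphism.

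For the fiber edge, the unique $0$-cell ensures $F^0 X_G = X$ is the fiber, so the inclusion $i$ factors through $F^0 X_G$. Given $\beta \in H^l(X_G)$, its image in $E_\infty^{0,l} = H^l(X_G)/F^1 H^l(X_G)$ is represented by the restriction of a cocycle to $F^0 X_G = X$, i.e.\ by $i^*\beta$; the chain of inclusions $E_\infty^{0,l} \subset E_l^{0,l} \subset \cdots \subset E_2^{0,l} \cong H^l(X)$ is compatible with this identification. The step that most requires care is the naturality of the filtration identifications across pages of the spectral sequence — verifying that the successive quotients on the base side and inclusions on the fiber side fit together with the exact couple — but this is a formal diagram chase rather than a substantive obstacle, and the whole argument is standard once the filtration is in place.
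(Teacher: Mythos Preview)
The paper does not supply a proof of this proposition at all; it is quoted verbatim as \cite[Theorem 5.9]{Mccleary} and used as a black box. Your sketch is the standard skeletal-filtration argument (essentially what McCleary does), and it is correct: the base edge follows because $\rho^*\alpha$ restricts trivially to $\rho^{-1}(B_G^{(k-1)})$, and the fiber edge follows because $F^0 X_G$ is the fiber over the basepoint. One small remark: the tensor identification $E_2^{p,q}\cong H^p(B_G)\otimes H^q(X)$ you invoke needs field coefficients (as in the paper's $\mathbb{Z}_2$ setting) in addition to the simple-coefficients hypothesis, but your edge-homomorphism argument does not actually depend on that decomposition, so this is harmless.
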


The product in $E_{r+1}^{*,*}$ is induced by the product in $E_{r}^{*,*}$ and the differentials are derivations. The graded commutative algebra $H^*(X_G)$ is isomorphic to Tot$E_{\infty}^{*,*}$, the total complex of $E_{\infty}^{*,*}$. Note also that $H^*(X_G)$ is a $H^*(B_G)$-module with the multiplication given by $$(b,x) \mapsto \rho^*(b) \cup x$$ for $b \in H^*(B_G)$ and $x \in H^*(X_G)$. 

If $G= \mathbb{Z}_2$ act on a space $X \simeq_2 \mathbb{R}P^n \times \mathbb{R}P^m$ or $\mathbb{C}P^n \times \mathbb{C}P^m$ such that the induced action on $H^*(X)$ is trivial, then, by the universal coefficient theorem, $$E_2^{k,l} \cong E_2^{k,0} \otimes E_2^{0,l}.$$  Since $E_2^{0,l}=H^0(B_G; \mathcal{H}^l(X))=H^l(X)^G=H^l(X)$, we have $$E_2^{k,l} \cong H^k(B_G) \otimes H^l(X).$$

We now recall some results regarding $\mathbb{Z}_2$-actions on finitistic spaces.

\begin{proposition}\cite[Chapter VII, Theorem 1.5]{Bredon2}
Let $G = \mathbb{Z}_2$ act freely on a finitistic space $X$. Suppose that $H^j(X) = 0$ for all $j > n$, then $H^j(X_G) =0$ for all $j > n$. 
\end{proposition}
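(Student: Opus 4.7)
The plan is to reduce the statement to one about the cohomology of the orbit space $X/G$, and then to analyze it via a Gysin-type long exact sequence for the double cover $\pi:X\to X/G$.

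Since $G$ acts freely on $X$, the second projection $X_G = (X\times E_G)/G \to X/G$ is a fibration with contractible fiber $E_G$. It therefore induces an isomorphism $H^*(X_G;\mathbb{Z}_2)\cong H^*(X/G;\mathbb{Z}_2)$, so it suffices to prove $H^j(X/G)=0$ for $j>n$.

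Because $\pi$ is a covering, $R^q\pi_*\mathbb{Z}_2=0$ for $q>0$, and the Leray spectral sequence for $\pi$ collapses to give $H^*(X/G;\pi_*\mathbb{Z}_2)\cong H^*(X)$. Combining this with the short exact sequence of sheaves $0\to\mathbb{Z}_2\to\pi_*\mathbb{Z}_2\to L\to 0$, in which the quotient $L$ is trivial as an abstract $\mathbb{Z}_2$-sheaf (the only automorphism of the rank-one stalk is the identity in characteristic two), produces the long exact sequence
\begin{equation*}
\cdots \to H^{j-1}(X/G) \xrightarrow{\cup w} H^j(X/G) \xrightarrow{\pi^*} H^j(X) \xrightarrow{\tau} H^j(X/G) \xrightarrow{\cup w} H^{j+1}(X/G) \to \cdots
\end{equation*}
where $w\in H^1(X/G;\mathbb{Z}_2)$ classifies the cover. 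Plugging in the hypothesis $H^j(X)=0$ for $j>n$, this forces $\cup w:H^j(X/G)\to H^{j+1}(X/G)$ to be an isomorphism for every $j\geq n$, so the tower $H^{n+1}(X/G)\cong H^{n+2}(X/G)\cong\cdots$ is constant.

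The main obstacle is to conclude that this stable value vanishes. Here I would appeal to the finitistic hypothesis: since $G$ is compact and acts on the finitistic space $X$, the orbit space $X/G$ is again finitistic (as stated in the introduction), and a constant nonzero tail in arbitrarily high cohomology is not compatible with that structure once the cohomology of the double cover is known to vanish beyond degree $n$. Making this precise requires either a cohomological-dimension argument for finitistic spaces, or, alternatively, a direct inspection of the Borel spectral sequence for the free action, in which the vanishing $H^q(X)=0$ for $q>n$ together with the transgression mechanism available for free actions forces $E_\infty^{p,q}=0$ whenever $p+q>n$, yielding the desired conclusion.
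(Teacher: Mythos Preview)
The paper does not give its own proof of this proposition; it simply quotes Bredon. So there is no argument in the paper to compare against, and the question is just whether your proposal stands on its own.

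Your reduction to $X/G$ and the Gysin sequence for the double cover are fine, and you correctly reach the point where $\cup\,w\colon H^{j}(X/G)\to H^{j+1}(X/G)$ is an isomorphism for all $j>n$. The problem is the last paragraph: you explicitly stop short of proving that the stable value vanishes, and neither of your two suggested escape routes actually closes the gap. First, ``finitistic'' does not by itself bound \v{C}ech cohomological dimension, so the sentence ``a constant nonzero tail in arbitrarily high cohomology is not compatible with that structure'' is not a theorem you can invoke; what Bredon uses is a more delicate comparison with finite-dimensional approximations of $E_G$ together with tautness/continuity properties of \v{C}ech cohomology on finitistic spaces, and that is precisely the work being cited. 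Second, the alternative you sketch (``the transgression mechanism available for free actions forces $E_\infty^{p,q}=0$ whenever $p+q>n$'') is a restatement of the conclusion, not an argument: nothing in the $E_2$-page alone tells you that enough differentials are nonzero to kill the row $q=0$ beyond column $n$; that is exactly the content of Proposition~3.3.

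In short, your write-up is an outline that arrives at the genuine difficulty and then defers it. To make it a proof you would need to supply the finitistic input---e.g., show that $H^{j}(X/G)\cong\varinjlim H^{j}\big((X\times S^{k})/G\big)$ via the finite skeleta of $E_G$, and use that each $(X\times S^{k})/G$ has vanishing cohomology above degree $n+k$ to force the stable value to be zero. That is essentially Bredon's argument.
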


Let $h:X_G \to X/G$ be the map induced by the $G$-equivariant projection $X \times E_G \to X$. Then the following is true.

\begin{proposition}\cite[Chapter VII, Proposition 1.1]{Bredon2}
Let $G=\mathbb{Z}_2$ act freely on a finitistic space $X$. Then $$h^*: H^*(X/G) \stackrel{\cong}{\longrightarrow} H^*(X_G).$$
\end{proposition}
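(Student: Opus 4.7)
The plan is to exhibit $h: X_G \to X/G$ as a map whose fibers are contractible, and then conclude that $h^*$ is an isomorphism via the Leray spectral sequence.

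First, I would unpack what the fiber of $h$ looks like. Because $G$ acts freely on $X$, the orbit projection $\pi: X \to X/G$ is a principal $G$-bundle. The space $X_G = (X \times E_G)/G$ is then the associated bundle with fiber $E_G$, and $h$ is its projection to the base $X/G$. Over a point $[x] \in X/G$, the fiber is $h^{-1}([x]) = (Gx \times E_G)/G$. Since the action on $X$ is free, there is a $G$-equivariant bijection $Gx \cong G$, so this quotient is $(G \times E_G)/G \cong E_G$, which is contractible.

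Second, I would apply the Leray spectral sequence of $h$ in \v{C}ech cohomology,
$$E_2^{p,q} = \check{H}^p(X/G; \mathcal{H}^q(h)) \Longrightarrow \check{H}^{p+q}(X_G),$$
where $\mathcal{H}^q(h)$ denotes the Leray sheaf of $h$. Its stalks are $H^q(E_G)$, which vanish for $q > 0$ and equal $\mathbb{Z}_2$ for $q = 0$. Hence $E_2^{p,q} = 0$ for $q \neq 0$, the spectral sequence collapses onto the $q=0$ row, and the edge homomorphism gives $h^* \colon \check{H}^*(X/G) \stackrel{\cong}{\longrightarrow} \check{H}^*(X_G)$. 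Equivalently, one could invoke the Vietoris--Begle mapping theorem applied to the proper map $h$ with acyclic fibers.

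The main subtlety is verifying the hypotheses needed to run the Leray spectral sequence and to compute the stalks of $\mathcal{H}^q(h)$ correctly. The finitistic assumption on $X$ is essential here: it implies that $X/G$ is paracompact Hausdorff (by the result of Deo cited in the introduction), so \v{C}ech cohomology is well-behaved sheaf-theoretically and the Leray spectral sequence of the continuous map $h$ is available with the stalk description above. Freeness of the action is what promotes $\pi$ to a principal $G$-bundle and provides the uniform identification of fibers of $h$ with the contractible space $E_G$.
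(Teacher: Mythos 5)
The paper does not actually prove this proposition --- it is quoted from Bredon \cite[Chapter VII, Proposition 1.1]{Bredon2} --- and your outline is the idea behind Bredon's argument: freeness makes every fibre of $h$ a copy of $E_G$, and one wants to conclude by a Leray or Vietoris--Begle argument. But as written there is a genuine gap at exactly the step you wave through as ``the main subtlety''. The map $h$ is \emph{not} proper: $E_G=E_{\mathbb{Z}_2}=\mathbb{S}^\infty$ is not compact, so $h$ is not a closed map with compact fibres, the Vietoris--Begle mapping theorem in its usual form does not apply, and the stalk of the Leray sheaf $\mathcal{H}^q(h)$ at $[x]$ is $\varinjlim_{U\ni[x]}\check{H}^q\bigl(h^{-1}(U)\bigr)$, which cannot be identified with $\check{H}^q\bigl(h^{-1}([x])\bigr)=\check{H}^q(E_G)$ without a tautness argument that fails for general non-closed maps. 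Paracompactness of $X/G$ (which in any case follows from $X$ being paracompact Hausdorff and the orbit map being closed, not from the Deo result on finitistic spaces) does not repair this.

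There are two standard ways to close the gap. One is Bredon's: replace $E_G$ by the compact approximations $E_G^{(k)}=\mathbb{Z}_2^{*(k+1)}=\mathbb{S}^k$; each map $X\times_G\mathbb{S}^k\to X/G$ \emph{is} closed with compact (and increasingly acyclic) fibres, so the stalk computation is legitimate there, and one then passes to the limit $k\to\infty$ --- this limit step is where the finitistic hypothesis actually does its work. The other is to note that a free action of a finite group on a Hausdorff space is properly discontinuous, so $X\to X/G$ is a principal $\mathbb{Z}_2$-bundle; hence $h$ is a locally trivial bundle with contractible fibre $E_G$ over a paracompact base, therefore a Hurewicz fibration, and Dold's theorem shows $h$ is a fibre homotopy equivalence; homotopy invariance of \v{C}ech cohomology on paracompact spaces then finishes the proof. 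The paper's remark that ``in fact $X/G$ and $X_G$ have the same homotopy type'' is alluding to this second route. Either repair should be made explicit; the spectral-sequence skeleton alone does not suffice.
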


In fact $X/G$ and $X_G$ have the same homotopy type.

\begin{proposition}\cite[Chapter VII, Theorem 1.6.]{Bredon2}
Let $G=\mathbb{Z}_2$ act freely on a finitistic space $X$. Suppose that $\sum_{i\geq0} \textrm{rk}\big( H^i(X) \big)< \infty$ and the induced action on $H^*(X)$ is trivial, then the Leray spectral sequence associated to $X \hookrightarrow X_G \longrightarrow B_G$ does not degenerate.
\end{proposition}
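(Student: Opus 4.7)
The plan is to argue by contradiction, combining the two preceding propositions (Bredon VII.1.1/1.5). Suppose that the spectral sequence degenerates at $E_2$, so that $E_\infty^{*,*} = E_2^{*,*}$. Since the induced $G$-action on $H^*(X)$ is trivial, the local coefficient system on $B_G$ is simple, and by the universal coefficient theorem (as observed in the excerpt) one has
$$E_2^{k,l} \cong H^k(B_G) \otimes H^l(X).$$
Focusing on the bottom row $l = 0$ and using only that $X$ is nonempty (so $H^0(X) \cong \mathbb{Z}_2$), I would conclude $E_2^{k,0} \cong H^k(B_G) \cong \mathbb{Z}_2$ for every $k \geq 0$.

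Next I would read off a consequence for $H^*(X_G)$. Under the degeneration hypothesis, $E_\infty^{k,0} = E_2^{k,0} \neq 0$, and by the edge-homomorphism description recalled above, $E_\infty^{k,0}$ embeds into $H^k(X_G)$. Hence $H^k(X_G) \neq 0$ for all $k \geq 0$; in particular $H^*(X_G)$ is unbounded.

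Finally I would invoke the finiteness of total rank: because $\sum_{i \geq 0} \operatorname{rk}(H^i(X)) < \infty$, there exists $n$ with $H^j(X) = 0$ for all $j > n$. The preceding proposition (Bredon VII.1.5) then gives $H^j(X_G) = 0$ for every $j > n$, which contradicts what was obtained in the previous paragraph. This contradiction shows the spectral sequence cannot degenerate.

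I do not foresee a serious technical obstacle: the entire argument is a two-line assembly of the prior propositions, with the only delicate point being the correct handling of the edge map to ensure that a nonzero $E_\infty^{k,0}$ actually produces a nonzero class in $H^k(X_G)$ (which is immediate from the $E_\infty$-filtration, since $E_\infty^{k,0}$ is the bottom filtration piece).
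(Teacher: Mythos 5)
Your argument is correct and is exactly the standard proof of this result (the paper itself only cites Bredon, Chapter VII, Theorem 1.6, and Bredon's proof is the same assembly): degeneration would force $E_\infty^{k,0}=E_2^{k,0}\cong H^k(B_G)\neq 0$ to inject into $H^k(X_G)$ for all $k$ via the edge homomorphism, contradicting the vanishing of $H^j(X_G)$ for $j$ large that follows from freeness and the finiteness of $\sum_i \mathrm{rk}\,H^i(X)$. The one delicate point you flag --- that $E_\infty^{k,0}$ is the bottom filtration piece and hence a genuine subgroup of $H^k(X_G)$ --- is handled correctly.
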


\begin{proposition}\cite[Chapter VII, Theorem 7.4]{Bredon2}
Let $G= \mathbb{Z}_2= \langle T \rangle$ act on a finitistic space $X$ and let $H^i(X)=0$ for all $i > 2n$ and $H^{2n}(X)=\mathbb{Z}_2$. Suppose that $ c\in H^{n}(X)$ is an element such that $cT^*(c) \neq 0$, then the fixed point set is non-empty.
\end{proposition}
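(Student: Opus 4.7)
The plan is to argue by contradiction using the Leray spectral sequence of the Borel fibration $X \hookrightarrow X_G \to B_G$. Suppose $X^G = \emptyset$; then $G$ acts freely, and the free-action proposition recalled above gives $H^j(X_G) = 0$ for every $j > 2n$, so $E_\infty^{p,q} = 0$ whenever $p+q > 2n$.

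I would track the element $\sigma := c \cdot T^*(c) \in H^{2n}(X) = \mathbb{Z}_2$. Because $T^*$ is a ring involution and mod-$2$ cohomology is graded commutative, $T^*(\sigma) = T^*(c) \cdot c = \sigma$, so $\sigma$ is $G$-invariant and represents the unique nonzero class in $E_2^{0, 2n} = H^{2n}(X)^G$. Suppose for the moment that $\sigma$ is a permanent cycle. Then $t \cdot \sigma \in E_2^{1, 2n}$ is also a permanent cycle: the Leibniz rule together with $d_r(t) = 0$ gives $d_r(t\sigma) = t \cdot d_r(\sigma) = 0$, and no differential of length $\geq 2$ can land in $E_r^{1, 2n}$ because its source $E_r^{1-r, 2n+r-1}$ vanishes for bidegree reasons. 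But $t\sigma \neq 0$ on $E_2$ (the action on $H^{2n}(X) = \mathbb{Z}_2$ is forced to be trivial, so $E_2^{1,2n} \cong \mathbb{Z}_2$), whence $E_\infty^{1, 2n} \neq 0$, contradicting $H^{2n+1}(X_G) = 0$.

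The main obstacle is therefore to verify that $\sigma$ is a permanent cycle, i.e.\ that $d_r(\sigma) = 0$ for $2 \leq r \leq 2n+1$ (for larger $r$ the target $E_r^{r, 2n - r + 1}$ is zero by the vanishing hypothesis on $H^*(X)$). In the easy subcase $c = T^*(c)$ one has $c \in E_2^{0,n}$ and $\sigma = c^2$, and the Leibniz rule in characteristic two gives $d_r(c^2) = 2c \cdot d_r(c) = 0$ at once. In the harder subcase $c \neq T^*(c)$ neither factor individually represents a class in $E_2^{0, n}$; here I would pass to the group-cohomology description $E_2^{k,l} = H^k(G; H^l(X))$ and analyze each possible target $E_r^{r, 2n-r+1}$ of a differential on $\sigma$. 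The hypotheses $H^j(X) = 0$ for $j > 2n$ and $H^{2n}(X) = \mathbb{Z}_2$ tightly constrain these targets, and exhibiting $\sigma$ as a cup product of the two $G$-conjugate classes $c$ and $T^*(c)$ should force each $d_r(\sigma)$ to lie in a piece of $E_r^{r, 2n - r + 1}$ annihilated by the norm, hence to vanish.

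As an alternative that bypasses the delicate permanent-cycle analysis, one can invoke the Borel localization theorem: freeness of the action makes $H^*(X_G)$ a $t$-torsion module over $H^*(B_G) = \mathbb{Z}_2[t]$, and together with the sharp vanishing $H^j(X_G) = 0$ for $j > 2n$ and the existence of a lift of $\sigma$ to $H^{2n}(X_G)$ this again yields a contradiction with the $\mathbb{Z}_2[t]$-module structure on the top cohomology.
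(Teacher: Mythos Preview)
The paper does not prove this proposition; it is quoted without proof from Bredon's book, so there is no ``paper's own proof'' to compare against. I will therefore assess your argument on its merits.

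Your overall architecture is sound: assume the action is free, use $H^j(X_G)=0$ for $j>2n$, and derive a contradiction by producing a nonzero class in $E_\infty^{1,2n}$. You correctly identify that everything hinges on showing that $\sigma=cT^*(c)$ is a permanent cocycle in $E_*^{0,2n}$. But your justification of this step is the gap.

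In the ``easy'' subcase $c=T^*(c)$, the Leibniz identity $d_r(c^2)=2c\,d_r(c)=0$ only applies while $c$ itself survives to the $E_r$-page. If $d_{r_0}(c)\neq 0$ for some $r_0$, then $c$ is no longer present on $E_{r_0+1}$, and you have no factorization of $c^2$ with which to invoke Leibniz for $d_{r_0+1},d_{r_0+2},\dots$. Controlling higher differentials on a square after the base dies requires additional input (Steenrod operations in the spectral sequence, Kudo-type theorems), which you do not invoke. In the ``hard'' subcase $c\neq T^*(c)$ your sketch is purely heuristic; the phrase ``should force each $d_r(\sigma)$ to lie in a piece \dots\ annihilated by the norm'' is not an argument. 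Finally, your alternative via localization presupposes ``the existence of a lift of $\sigma$ to $H^{2n}(X_G)$,'' which is exactly the permanent-cocycle statement you are trying to prove, so that route is circular.

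What is missing is a genuine construction of a lift. The standard device (and essentially what underlies Bredon's proof) is the multiplicative transfer, or Steenrod--Evens norm, $N:H^n(X)\to H^{2n}(X_G)$ for $G=\mathbb{Z}_2$, which satisfies $i^*N(c)=c\cdot T^*(c)$. This single formula shows at once that $\sigma$ lies in the image of the edge homomorphism $i^*$ and is therefore a permanent cocycle; your contradiction then goes through exactly as you wrote. Without the norm (or an equivalent construction), the crucial step remains unproved.
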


\section{Proofs of theorems}
Let $X \simeq_2 \mathbb{R}P^n \times \mathbb{R}P^m$ or $\mathbb{C}P^n \times \mathbb{C}P^m$ be a finitistic space. We first observe for what values of $n$ and $m$, the space $X$ admits a free involution. More precisely, we prove the following for the real case.

\begin{lemma}
Let $G= \mathbb{Z}_2$ act freely on a finitistic space $X \simeq_2 \mathbb{R}P^n \times \mathbb{R}P^m$. Then both $n$ and $m$ cannot be even.
\end{lemma}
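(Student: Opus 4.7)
The plan is to assume for contradiction that both $n$ and $m$ are even, and then derive a contradiction via Proposition 3.6. Writing $H^*(X) = \mathbb{Z}_2[a,b]/\langle a^{n+1}, b^{m+1}\rangle$ with $\deg a = \deg b = 1$, the top non-vanishing group $H^{n+m}(X) = \mathbb{Z}_2 \langle a^n b^m\rangle$ sits in the even degree $n+m$, so Proposition 3.6 applies at middle degree $(n+m)/2$. It will suffice to exhibit a class $c \in H^{(n+m)/2}(X)$ with $c \cdot T^*(c) \neq 0$, where $T$ generates $G$; the natural candidate is $c = a^{n/2} b^{m/2}$, which is well-defined precisely because $n$ and $m$ are both even.

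Next, I would classify the algebra involution $T^*$ of $H^*(X)$. Its restriction to the two-dimensional space $H^1(X) = \mathbb{Z}_2 a \oplus \mathbb{Z}_2 b$ is one of only four $\mathbb{Z}_2$-linear involutions: the identity, the swap $a \leftrightarrow b$, or one of two shears such as $T^*(a) = a$, $T^*(b) = a+b$. The algebra relations $T^*(a)^{n+1} = 0$ and $T^*(b)^{m+1} = 0$ prune these sharply: assuming $n \leq m$ without loss of generality, reading the coefficient of $b^{n+1}$ in the expansion of $(\lambda a + \mu b)^{n+1}$ forces $T^*(a) = a$; the swap survives only when $n = m$; and the shear $T^*(b) = a + b$ is admissible only under a Lucas-type divisibility condition on $\binom{m+1}{k} \pmod{2}$.

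In every admissible case, a direct computation using $a^{n+1} = b^{m+1} = 0$ then yields $c \cdot T^*(c) = a^n b^m \neq 0$. For $T^* = \mathrm{id}$, or for the swap when $n = m$, this is immediate from $c^2 = a^n b^m$. For the shear $T^*(b) = \alpha a + b$ (necessarily with $n < m$), one computes $c \cdot T^*(c) = a^n b^{m/2}(\alpha a + b)^{m/2}$, and every term of the binomial expansion except the pure $b^m$ part carries a factor $a^{n+k}$ with $k \geq 1$, killed by $a^{n+1} = 0$; only $a^n b^m$ survives. Proposition 3.6 then produces a fixed point, contradicting freeness of the action.

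The principal obstacle is the classification of admissible algebra involutions on $H^*(X)$: one must enumerate the four candidate linear involutions of $H^1(X)$ and eliminate those incompatible with the defining relations of the quotient ring. Once that is settled, the remaining binomial bookkeeping is routine, though the subcases $n < m$ and $n = m$ must be treated separately because the surviving forms of $T^*$ differ between them.
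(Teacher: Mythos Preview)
Your approach via Proposition~3.6 with $c = a^{n/2}b^{m/2}$ is sound, but it is a genuinely different route from the paper's. The paper instead computes the Euler characteristic: when $n$ and $m$ are both even, a direct count of the mod~$2$ Betti numbers gives $\chi(X)=1$, and Floyd's formula $\chi(X)+\chi(X^G)=2\chi(X/G)$ then forces $\chi(X^G)$ to be odd, contradicting $X^G=\varnothing$. This is shorter and avoids any analysis of $T^*$ whatsoever. Your method, by contrast, foreshadows the technique the paper uses later in Proposition~4.3, where Proposition~3.6 (with $c=a^n$) is invoked to show $T^*$ is trivial; in that sense your argument is more of a piece with the rest of the paper, at the cost of the case analysis you describe.

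One simplification you overlooked: since $m$ is assumed even, $\binom{m+1}{1}=m+1$ is odd, so your Lucas-type condition for the shear $T^*(b)=a+b$ already fails at $k=1$, and symmetrically for the other shear when $n=m$. Hence the only admissible involutions of $H^*(X)$ are the identity and (when $n=m$) the swap, both of which fix $c$, and $cT^*(c)=c^2=a^nb^m$ follows with no binomial bookkeeping at all. As written, your case split leaves the shears unaddressed when $n=m$, because your forcing argument for $T^*(a)=a$ uses $b^{n+1}\neq 0$ and hence needs $n<m$; but the observation just made closes that gap immediately, and in any event your shear computation would still return $a^nb^m$ even if such a $T^*$ were hypothetically admissible.
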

\begin{proof}
Suppose both $n$ and $m$ are even. First consider the case when $n < m$. Note that when $l \leq n$, a basis of $H^l(X)$ consists of $$\{ a^l, a^{l-1}b,...,ab^{l-1}, b^l \}.$$ When $n < l \leq m$, a basis of $H^l(X)$ is $$\{ a^nb^{l-n}, a^{n-1}b^{l-n+1}\\,...,ab^{l-1}, b^l \}.$$ And when $m < l \leq n+m $, a basis consists of $$\{ a^nb^{l-n}, a^{n-1}b^{l-n+1},...,\\a^{l-m+1}b^{m-1}, a^{l-m}b^m \}.$$ Therefore the Euler characteristic
{\setlength\arraycolsep{2pt}
\begin{eqnarray*}
\chi(X) & = & \sum_{l= 0}^n (-1)^l \textrm{rk} \big( H^l(X) \big) + \sum_{l=n+1}^m (-1)^l  \textrm{rk}\big( H^l(X) \big) + \sum_{l=m+1}^{n+m} (-1)^l  \textrm{rk} \big( H^l(X) \big)\nonumber\\
& = & \sum_{l= 0}^n (-1)^l (l+1) + \sum_{l=n+1}^m (-1)^l (n+1) + \sum_{l=m+1}^{n+m} (-1)^l(n+m+1-l)
\nonumber\\
& = & \frac{n+2}{2} + 0 - \frac{n}{2} = 1
\nonumber\\
\end{eqnarray*}}
For the case $n=m$, we have either $l \leq n$ or $n < l \leq 2n$. A similar computation gives $\chi(X)=1$. By Floyd's Euler characteristic formula \cite[p.145]{Bredon2}, we have $$\chi(X)+ \chi(X^G)= 2 \chi(X/G).$$ But in case of a free involution $\chi(X^G)=0$ gives a contradiction. Hence at least one of them must be odd.
\end{proof}

Similarly, for the complex case, we prove the following lemma.

\begin{lemma}
Let $G= \mathbb{Z}_2$ act freely on a finitistic space $X \simeq_2 \mathbb{C}P^n \times \mathbb{C}P^m$. Then both $n$ and $m$ cannot be even.
\end{lemma}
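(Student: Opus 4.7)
The plan is to mirror the Euler characteristic argument used in the real case, but to exploit the fact that in the complex setting the mod 2 cohomology is concentrated entirely in even degrees, which makes the computation even simpler.

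First I would write $H^*(X;\mathbb{Z}_2)\cong \mathbb{Z}_2[a,b]/\langle a^{n+1},b^{m+1}\rangle$ with $\deg a=\deg b=2$. Because every generator sits in even degree, $H^l(X)=0$ whenever $l$ is odd. Consequently the Euler characteristic collapses to a sum of nonnegative terms:
\begin{equation*}
\chi(X)=\sum_{l\geq 0}(-1)^l\,\mathrm{rk}\,H^l(X)=\sum_{l\geq 0}\mathrm{rk}\,H^l(X)=(n+1)(m+1).
\end{equation*}

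Next I would invoke Floyd's Euler characteristic formula $\chi(X)+\chi(X^G)=2\chi(X/G)$, as in the previous lemma. Since the action is free, $X^G=\emptyset$ and hence $\chi(X^G)=0$, which forces $\chi(X)$ to be even. But if both $n$ and $m$ are even, then $(n+1)(m+1)$ is a product of two odd numbers, hence odd, contradicting the parity conclusion just obtained.

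There is essentially no obstacle here: the only thing to verify carefully is that the cohomology of $X$ really is zero in every odd degree, which is immediate from the graded structure of $\mathbb{Z}_2[a,b]/\langle a^{n+1},b^{m+1}\rangle$ with $a,b$ in degree $2$. The proof is then a one-line parity contradiction, shorter than its real analogue because no separate bookkeeping of basis elements in degrees $\leq n$, between $n$ and $m$, and above $m$ is required.
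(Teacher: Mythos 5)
Your proof is correct and follows the same route the paper intends for this lemma, namely the Euler characteristic computation combined with Floyd's formula $\chi(X)+\chi(X^G)=2\chi(X/G)$, exactly as in the real case. Your observation that the complex cohomology is concentrated in even degrees, so that $\chi(X)=(n+1)(m+1)$ immediately, is a clean and valid shortcut.
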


Note that the same argument shows that $\mathbb{Z}_2$ cannot act freely on a finitistic space $X \simeq_2 \mathbb{R}P^n$ or $\mathbb{C}P^n$ when $n$ is even.

When a group $G$ acts on a space $X$, there is an action on $H^*(X)$ given by the automorphisms induced by $g^{-1} : X \to  X$ for $g \in G$. Note that it is often difficult to find the induced map on the cohomology even in the case of nice spaces such as spheres. In our context we prove the following.

\begin{proposition}
Let $G= \mathbb{Z}_2$ act freely on a finitistic space $X \simeq_2 \mathbb{R}P^n \times \mathbb{R}P^m$. Then the induced action on $H^*(X)$ is trivial.
\end{proposition}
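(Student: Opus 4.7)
The plan is a case analysis on the possible forms of $T^*|_{H^1(X)}$, leveraging the algebra structure together with Bredon's fixed-point criterion (Proposition 3.6). Since $T^*$ is an involutive $\mathbb{Z}_2$-algebra automorphism of $H^*(X) = \mathbb{Z}_2[a,b]/(a^{n+1}, b^{m+1})$ and this algebra is generated in degree one, $T^*$ is determined by $T^*|_{H^1(X)}$, an involution of the two-dimensional space $\mathbb{Z}_2 \langle a \rangle \oplus \mathbb{Z}_2 \langle b \rangle$. First I would enumerate these involutions and eliminate those incompatible with the nilpotence indices $n+1$ and $m+1$.

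Assume without loss of generality $n \leq m$. If $n < m$, the asymmetry $a^{n+1} = 0$ while $b^{n+1} \neq 0$ forces $T^*(a) = a$, so $T^*(b) = \gamma a + b$ for some $\gamma \in \{0,1\}$; moreover, if $m$ is even then $\binom{m+1}{1} = m+1$ is odd, and expanding $(a+b)^{m+1}$ reveals the surviving term $ab^m$, contradicting $T^*(b)^{m+1} = 0$ and forcing $\gamma = 0$. If $n = m$ (necessarily both odd by Lemma 4.1), the nontrivial candidates for $T^*|_{H^1}$ are the swap $a \leftrightarrow b$ and the two shears. Taking $c = b^n \in H^n(X) = H^{(n+m)/2}(X)$, a direct computation shows $c \cdot T^*(c) = a^n b^n$ in each case---the generator of $H^{n+m}(X) = \mathbb{Z}_2$---so Proposition 3.6 produces a fixed point, contradicting freeness.

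For the remaining subcase $n < m$ with both $n, m$ odd and $\gamma = 1$, the algebraic consistency $T^*(b)^{m+1} = 0$ imposes $\binom{m+1}{k} \equiv 0 \pmod 2$ for $1 \leq k \leq n$. Here I would apply Proposition 3.6 to elements of the form $c = a^j b^{k - j}$ with $k = (n+m)/2$ and $0 \leq j \leq (n-1)/2$, for which a short computation gives $c \cdot T^*(c) = \binom{k-j}{n-2j}\, a^n b^m$; the aim is to combine Lucas' theorem with the standing constraint on the $\binom{m+1}{i}$'s to exhibit at least one admissible $j$ producing an odd coefficient, forcing the required contradiction.

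The main obstacle is the sporadic cases where $n + m$ is odd (so Proposition 3.6 is not directly applicable, as it requires an even top degree), together with the delicate situations where every admissible binomial $\binom{k-j}{n-2j}$ happens to be even. To close these I would pass to the Leray spectral sequence itself: starting from the $E_2$-page with the local coefficients computed from the putative nontrivial $T^*$-action---which has strictly smaller rank in the odd rows than the trivial-action page---use the multiplicativity of the differentials (Proposition 3.1), the permanence of $\rho^*(t) \in E_\infty^{1,0}$, and the vanishing $H^i(X_G) = 0$ for $i > n+m$ from Proposition 3.3, to produce an inconsistency. For instance, one checks that no pattern of admissible differentials compatible with the derivation property can simultaneously kill $t^k$ for every $k > n+m$ while respecting the resulting $E_\infty$-structure as an associated graded of a genuine $\mathbb{Z}_2[t]$-module $H^*(X/G)$.
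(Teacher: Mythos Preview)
Your treatment of the case $n=m$ is essentially the paper's: both invoke Proposition~3.6 with $c$ equal to the top power of a degree-one generator. (Minor quibble: your choice $c=b^n$ fails for the shear $T^*(b)=b$, $T^*(a)=a+b$, where $c\cdot T^*(c)=b^{2n}=0$; you need $c=a^n$ there. The paper handles this by the symmetric phrasing ``assuming $T^*(a)\neq a$''.)

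For $n\neq m$ the paper does something much shorter than you do: it asserts that $a$, $b$, $a+b$ have pairwise distinct nilpotence orders $n+1$, $m+1$, $n+m+1$, so any algebra automorphism must fix each individually---no appeal to freeness or to Proposition~3.6 at all. You instead reduce to $T^*(a)=a$, $T^*(b)=a+b$ and embark on a parity and binomial-coefficient case analysis, aiming to apply Proposition~3.6 to classes $c=a^j b^{k-j}$ in the middle degree.

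Your route has a genuine gap. You yourself flag two unresolved obstacles: the case $n+m$ odd (where Proposition~3.6 does not even apply), and the possibility that every $\binom{k-j}{\,n-2j\,}$ with $0\le j\le (n-1)/2$ is even. The second already occurs at $(n,m)=(1,3)$: here $k=2$, the only admissible $j$ is $0$, and $\binom{2}{1}=2$ is even, so your Proposition~3.6 argument yields nothing. Your fallback---a direct spectral-sequence analysis with twisted coefficients culminating in ``one checks that no pattern of admissible differentials \ldots\ can simultaneously kill $t^k$''---is a hope, not a proof; this is precisely the hard step, and you give no indication of how to carry it out. In fairness, the paper's own claim that $a+b$ has order exactly $n+m+1$ also fails in this example (in $\mathbb{Z}_2[a,b]/\langle a^2,b^4\rangle$ one has $(a+b)^4=(a^2+b^2)^2=b^4=0$), so its $n\neq m$ argument is not airtight either; but that does not rescue your proposal, which remains incomplete as written.
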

\begin{proof}
First consider the case when $n = m$. Let $T$ be the generator of $G$ and let $a,~ b \in H^1(X)$ be generators of the cohomology algebra $H^*(X)$. Note that by the naturality of cup product $$T^*(a^ib^j)=(T^*(a))^i(T^*(b))^j$$ for all $i,~ j \geq 0$. Therefore it is enough to consider $$T^*: H^1(X) \to H^1(X).$$ Suppose $T^*$ is not identity, then it cannot preserve both $a$ and $b$. Assuming that $T^*(a) \neq a$, we have $T^*(a) = b$ or $T^*(a) = a+b$. This gives $a^nT^*(a^n)=a^nb^n \neq 0.$ Thus taking $c=a^n$, we have $cT^*(c) \neq 0.$ Therefore by Proposition 3.6 the fixed point set of $T$ is non-empty, which is a contradiction. Hence $T^*$ must be identity.

When $n \neq m$, we have that orders of $a$, $b$ and $a+b$ are $n+1$, $m+1$ and $n+m+1$ respectively. This gives $T^*(a) = a$ and $T^*(b) = b$. Hence in this case also $T^*$ is identity.
\end{proof}

Similarly, for the complex case we have the following.

\begin{proposition}
Let $G= \mathbb{Z}_2$ act freely on a finitistic space $X \simeq_2 \mathbb{C}P^n \times \mathbb{C}P^m$. Then the induced action on $H^*(X)$ is trivial.
\end{proposition}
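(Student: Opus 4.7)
The plan is to follow the proof of Proposition 4.3 almost verbatim, making only the adjustments forced by the fact that the algebra generators $a, b$ now live in degree two rather than degree one. Write $H^*(X) = \mathbb{Z}_2[a,b]/\langle a^{n+1}, b^{m+1} \rangle$ with $\deg(a)=\deg(b)=2$, and let $T$ generate $G$. By naturality of the cup product $T^*$ is a graded ring automorphism and $T^*(a^ib^j) = T^*(a)^i T^*(b)^j$, so it suffices to prove that $T^*$ is the identity on $H^2(X)$, whose only non-zero elements are $a$, $b$, and $a+b$.

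When $n \neq m$, the strategy is to distinguish the three non-zero elements of $H^2(X)$ by their nilpotency orders $n+1$, $m+1$, and $n+m+1$ in $H^*(X)$. Since any ring automorphism preserves nilpotency orders, $T^*$ must fix each of $a$ and $b$ individually.

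When $n = m$, I would argue by contradiction. Assume $T^*$ is not the identity; then after possibly relabeling $a$ and $b$ we may assume $T^*(a) \in \{b, a+b\}$. Take $c = a^n \in H^{2n}(X) = H^{n+m}(X)$. In the subcase $T^*(a) = b$ one has $T^*(c) = b^n$, giving $cT^*(c) = a^nb^n$. In the subcase $T^*(a) = a+b$, expanding $T^*(c) = (a+b)^n$ and multiplying by $a^n$ kills every term involving a positive power of $a$ (since $a^{n+1}=0$), leaving $cT^*(c) = a^nb^n$ once more. Because $a^nb^n$ generates the one-dimensional top cohomology $H^{4n}(X) = H^{2(n+m)}(X)$, Proposition 3.6 applied with this $c$ forces $X^G$ to be non-empty, contradicting freeness.

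The only delicate point is the nilpotency-order claim for $a+b$ when $n \neq m$, where one needs to inspect the surviving binomial coefficients modulo $2$ in $(a+b)^{n+m}$; once this is granted, the proof is purely formal and mirrors the real case step by step, with the only change being the degree shift.
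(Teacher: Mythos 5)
This is exactly the paper's intended argument: the paper proves the real case (Proposition 4.3) by the same two-pronged strategy --- nilpotency orders of $a$, $b$, $a+b$ when $n \neq m$, and Proposition 3.6 applied to $c = a^n$ when $n = m$ --- and disposes of the complex case with the word ``similarly'', which is precisely the degree-doubled translation you carry out. The delicate point you flag (that $(a+b)^{n+m}$ may vanish because the relevant binomial coefficient is even, so the order of $a+b$ need not be $n+m+1$) is glossed over in the paper's own proof as well, so your write-up is faithful to, and no less rigorous than, the original.
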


We now prove the theorems.
\bigskip

\noindent \textbf{Proof of Theorem 1.1.}\\
Let $G= \mathbb{Z}_2$ act freely on a finitistic space $X \simeq_2 \mathbb{R}P^n \times \mathbb{R}P^m$, $1 \leq n \leq m$. By Proposition 4.3, the induced action on the cohomology is trivial. Let $a$, $b \in H^1(X)$ be generators of the cohomology algebra
$$H^*(X) \cong \mathbb{Z}_2[a,b]/ \langle a^{n+1}, b^{m+1}\rangle.$$ The Leray spectral sequence does not degenerate at the $E_2$ term and we have $d_2 \neq 0$ with one of the following:\\
(i) $d_2(1\otimes a)= t^2 \otimes 1$ and $d_2(1\otimes b)= 0$.\\
(ii) $d_2(1\otimes a) =0 $ and $d_2(1\otimes b) = t^2 \otimes 1$.\\
(iii) $d_2(1\otimes a) =t^2 \otimes 1$ and $d_2(1\otimes b)=  t^2 \otimes 1$.\\
We deal with each case separately.\\

\noindent \textbf{The case (i)}\\
Let $d_2(1\otimes a)= t^2 \otimes 1$ and $d_2(1\otimes b)= 0$. Note that by the derivation property of the differential we have
\begin{displaymath}
d_2(1 \otimes a^pb^q) = \left\{ \begin{array}{ll}
t^2 \otimes a^{p-1}b^q & \textrm{if $p$ is odd}\\
0 & \textrm{if $p$ is even.}\\
\end{array} \right.
\end{displaymath}
If $n$ is even, then $a^{n+1}=0$ gives $0= d_2(1 \otimes a^{n+1})= t^2 \otimes a^n$, a contradiction. Hence $n$ must be odd.  Assume that $m$ is also odd. It suffices to compute $$d_2:E_2^{0,l} \to E_2^{2,l-1}.$$ When $l \leq n$, a basis of $E_2^{0,l} \cong H^l(X)$ consists of $$\{ a^l, a^{l-1}b,...,ab^{l-1}, b^l \}.$$ If $l$ is odd, then rk$(ker d_2)= \frac{l+1}{2}=$ rk$(im d_2)$. And if $l$ is even, then rk$(ker d_2)= \frac{l}{2}+1$ and rk$(im d_2)= \frac{l}{2}$.\\
When $n < l \leq m$, a basis consists of $$\{ a^nb^{l-n}, a^{n-1}b^{l-n+1},...,ab^{l-1}, b^l \}.$$ In this case rk$(ker d_2)= \frac{n+1}{2}=$ rk$(im d_2)$.\\
When $m < l \leq n+m $, a basis consists of $$\{ a^nb^{l-n}, a^{n-1}b^{l-n+1},...,a^{l-m+1}b^{m-1}, a^{l-m}b^m \}.$$ If $l$ is odd, then rk$(ker d_2)= \frac{n+m+1-l}{2}=$ rk$(im d_2)$. And if $l$ is even, then rk$(ker d_2)= \frac{n+m-l}{2}$ and  rk$(im d_2)= \frac{n+m+2-l}{2}$.\\
Since $$E_3^{k,l}= ker \{ d_2: E_2^{k,l} \to E_2^{k+2, l-1} \}/ im\{d_2: E_2^{k-2, l+1} \to E_2^{k,l}\},$$ it is clear from the observation above that rk$(E_3^{k,l})$ = 0 and hence $E_3^{k,l}=0$ for all $k \geq 2$ and for all $l$.\\
And  $E_3^{k,l}= ker \{ d_2: E_2^{k,l} \to E_2^{k+2, l-1} \}$ for $k$ = 0, 1 and for all $l$. Note that $$d_r: E_r^{k,l} \to E_r^{k+r, l-r+1}$$
is zero for all $r \geq 3$ as $E_r^{k+r, l-r+1}=0$. Hence $E_{\infty}^{*,*}= E_3^{*,*}$. Since $H^*(X_G) \cong$ Tot$E_{\infty}^{*,*}$, we have $$H^p(X_G)\cong \bigoplus_{i+j=p}E_{\infty}^{i,j}= E_{\infty}^{0,p} \oplus E_{\infty}^{1,p-1}$$ for all $0 \leq p \leq n+m$. Let $x= \rho^*(t) \in E_{\infty}^{1,0}$ be determined by $t \otimes 1 \in E_{2}^{1,0}$.
As $E_{\infty}^{2,0}=0$ we have  $x^2=0$. Note that $1 \otimes a^2 \in E_2^{0,2}$ is a permanent cocycle and therefore it determines an element $u \in E_{\infty}^{0,2}$. Choose $y \in H^2(X_G)$ such that $i^*(y)=a^2$. Then $y$ determines $u$ and satisfies $y^{\frac{n+1}{2}}=0$. Similarly, $1 \otimes b \in E_2^{0,1}$ is a permanent cocycle and  determines an element $v \in E_{\infty}^{0,1}$. Again we choose $z \in H^1(X_G)$ such that $i^*(z)=b$. Then $z$ determines $v$ and  $z^{m+1}=0$. Therefore $$H^*(X_G) \cong \mathbb{Z}_2[x,y,z]/\langle x^2, y^{\frac{n+1}{2}}, z^{m+1}\rangle,$$ where $deg(x)=1$, $deg(y)=2$ and $deg(z)=1$. As the action of $G$ is free, $H^*(X/G ) \cong H^*(X_G)$. The same argument works when $m$ is even. This gives Theorem 1.1 (1). \\

\noindent \textbf{The case (ii)}\\
Let $d_2(1\otimes a)= 0$ and $d_2(1\otimes b)= t^2 \otimes 1$. As above we see that $m$ must be odd.
Rest of the proof is same and we obtain $$H^*(X/G) \cong \mathbb{Z}_2[x,y,z]/\langle x^2, y^{n+1}, z^{\frac{m+1}{2}}\rangle,$$ where $deg(x)=1$, $deg(y)=1$ and $deg(z)=2$. This gives Theorem 1.1 (2).\\

\noindent \textbf{The case (iii)}\\
Let $d_2(1\otimes a) =t^2 \otimes 1$ and $d_2(1\otimes b)=  t^2 \otimes 1$. One can see that 
\begin{displaymath}
d_2(1 \otimes a^pb^q) = \left\{ \begin{array}{ll}
t^2 \otimes a^{p-1}b^q +t^2 \otimes a^pb^{q-1} & \textrm{if $p$ and $q$ are odd}\\
t^2 \otimes a^{p-1}b^q & \textrm{if $p$ is odd and $q$ is even}\\
t^2 \otimes a^pb^{q-1} & \textrm{if $p$ is even and $q$ is odd}\\
0 & \textrm{if $p$ and $q$ are even.}\\
\end{array} \right.
\end{displaymath}
Note that in this case both $n$ and $m$ are odd. As in case (i), by looking at the ranks of $kerd_2$ and $imd_2$ for various values of $l$, we get $E_3^{k,l}=0$ for all $k \geq 2$ and $E_3^{k,l}= ker \{ d_2: E_2^{k,l} \to E_2^{k+2, l-1} \}$ for $k$ = 0, 1. Again $d_r=0$ for all $r \geq 3$ and hence $E_{\infty}^{*,*}= E_3^{*,*}$. We have $$H^p(X_G)\cong \bigoplus_{i+j=p}E_{\infty}^{i,j}= E_{\infty}^{0,p} \oplus E_{\infty}^{1,p-1}$$ for all $0 \leq p \leq n+m$. Let $x= \rho^*(t) \in E_{\infty}^{1,0}$ be determined by $t \otimes 1 \in E_{2}^{1,0}$. As $E_{\infty}^{2,0}=0$ we have  $x^2=0$. Note that $1 \otimes a^2 \in E_2^{0,2}$ is a permanent cocycle and therefore yields an element $u \in E_{\infty}^{0,2}$. Choose $y \in H^2(X_G)$ such that $i^*(y)=a^2$. Then $y$ determines $u$ and satisfies $y^{\frac{n+1}{2}}=0$. Similarly, $1 \otimes b^2 \in E_2^{0,2}$ is a permanent cocycle and determines an element $v \in E_{\infty}^{0,2}$. Again we choose $z \in H^2(X_G)$ such that $i^*(z)=b^2$. Then $z$ determines $v$ and $z^{\frac{m+1}{2}}=0$. Also $1 \otimes (a+b) \in E_2^{0,1}$ is a permanent cocycle and yields an element $s \in E_{\infty}^{0,1}$. Let $w \in H^1(X_G)$ such that $i^*(w)= a+b$. Then $w$ determines $s$. Note that $$w^2=\alpha xw + \beta y + \gamma z$$ for some $\alpha, \beta, \gamma \in \mathbb{Z}_2$. Hence $$H^*(X_G) \cong \mathbb{Z}_2[x,y,z,w]/\langle x^2, y^{\frac{n+1}{2}}, z^{\frac{m+1}{2}}, w^2 -\alpha xw - \beta y - \gamma z\rangle,$$ where $deg(x)=1$, $deg(y)=2$, $deg(z)=2$ and $deg(w)=1$. As the action of $G$ is free, $H^*(X/G ) \cong H^*(X_G)$ which gives Theorem 1.1(3). \hfill $\Box$

\begin{corollary}
Let $G=\mathbb{Z}_2$ act freely on a finitistic space $X \simeq_2 \mathbb{R}P^n$, where $n \geq 1$ is odd. Then $$H^*(X/G) \cong \mathbb{Z}_2[x,y]/ \langle x^2, y^{\frac{n+1}{2}}\rangle,$$
where $deg(x)=1$ and $deg(y)=2$.
\end{corollary}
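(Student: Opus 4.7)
The plan is to mimic case (i) of the proof of Theorem 1.1, specialised to the single-generator situation. First, I would note that the existence of a free $\mathbb{Z}_2$-action on a $X \simeq_2 \mathbb{R}P^n$ already forces $n$ to be odd, by the remark following Lemma 4.2, so $(n+1)/2$ makes sense. Writing $H^*(X) \cong \mathbb{Z}_2[a]/\langle a^{n+1}\rangle$ with $\deg(a) = 1$, the class $a$ is the unique nonzero element in degree $1$, so any involution must preserve it; hence the induced action on $H^*(X)$ is automatically trivial and the $E_2$-page of the Leray spectral sequence of the Borel fibration $X \hookrightarrow X_G \to B_G$ splits as $E_2^{k,l} \cong H^k(B_G) \otimes H^l(X)$.

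Next, Proposition 3.5 gives non-degeneracy, so $d_2 \neq 0$; since $d_2 \colon E_2^{0,1} \to E_2^{2,0}$ is the only non-trivial option, one is forced to have $d_2(1 \otimes a) = t^2 \otimes 1$. The derivation rule then yields
\[
d_2(t^k \otimes a^p) = \begin{cases} t^{k+2} \otimes a^{p-1} & \text{if $p$ is odd},\\ 0 & \text{if $p$ is even}.\end{cases}
\]
A direct rank count column by column shows $E_3^{k,l} = 0$ for every $k \geq 2$, while for $k \in \{0,1\}$ one has $E_3^{k,l} \cong \mathbb{Z}_2$ precisely when $l$ is even with $0 \leq l \leq n-1$ and is zero otherwise. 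Since the differentials $d_r$ for $r \geq 3$ land in bidegrees with first coordinate $\geq 2$, they all vanish, and therefore $E_\infty = E_3$.

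Finally, I would extract the algebra structure exactly as in Theorem 1.1. Set $x = \rho^*(t) \in H^1(X_G)$ corresponding to $t \otimes 1 \in E_\infty^{1,0}$; then $x^2 = 0$ because $E_\infty^{2,0} = 0$. The permanent cocycle $1 \otimes a^2 \in E_2^{0,2}$ survives to $E_\infty^{0,2}$, so I lift it to a class $y \in H^2(X_G)$ with $i^*(y) = a^2$. The power $y^{(n+1)/2}$ lives in degree $n+1$ and is represented by $1 \otimes a^{n+1} = 0$; because $E_\infty^{0,n+1} = 0$ and (since $n$ is odd) $E_\infty^{1,n} = 0$, the entire group $H^{n+1}(X_G)$ vanishes, so $y^{(n+1)/2} = 0$ on the nose. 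Comparing dimensions with the candidate algebra shows no further relations are needed, giving $H^*(X_G) \cong \mathbb{Z}_2[x,y]/\langle x^2, y^{(n+1)/2}\rangle$, and Proposition 3.4 transfers this to $H^*(X/G)$.

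The only point requiring care is the usual one for these arguments: the relations $x^2 = 0$ and $y^{(n+1)/2} = 0$ must hold exactly in $H^*(X_G)$ rather than merely modulo lower filtration. This is automatic here because the relevant lower-filtration quotients $E_\infty^{k,l}$ with $k \geq 2$ are all zero, and the parity of $n$ kills $E_\infty^{1,n}$; so no genuine obstacle arises.
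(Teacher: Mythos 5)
Your proof is correct and follows essentially the same route as the paper, which simply specializes the spectral-sequence argument of Theorem 1.1 (case (i)) by setting $b=0$: forced differential $d_2(1\otimes a)=t^2\otimes 1$, collapse at $E_3$, and extraction of $x=\rho^*(t)$ and a lift $y$ of $a^2$. Your added care about why the relations $x^2=0$ and $y^{(n+1)/2}=0$ hold exactly (vanishing of the relevant filtration quotients) is a nice explicit touch, but it is the same argument.
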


\begin{proof}
The proof follows by taking $b=0$ in the proof of Theorem 1.1. We have $d_2(1 \otimes a) =t^2 \otimes 1$ and 
\begin{displaymath}
d_2(1 \otimes a^p) = \left\{ \begin{array}{ll}
t^2 \otimes a^{p-1} & \textrm{if $p$ is odd}\\
0 & \textrm{if $p$ is even.}\\
\end{array} \right.
\end{displaymath}
This gives $$d_2:E_2^{k,l} \to E_2^{k+2,l-1}$$ is an isomorphism for $l$ odd and zero for $l$ even. Hence $E_3^{k,l}=0$ for all $k \geq 2$. Also $$d_r: E_r^{k,l} \to E_r^{k+r, l-r+1}$$ is zero for all $r \geq 3$ as $E_r^{k+r, l-r+1}=0$ for all $r \geq 3$. Imitating the above proof we get $$H^*(X/G; \mathbb{Z}_2) \cong \mathbb{Z}_2[x,y]/ \langle x^2, y^{\frac{n+1}{2}}\rangle,$$
where $deg(x)=1$ and $deg(y)=2$.
\end{proof}

\begin{remark}
Taking $n=m=1$, we have $X\simeq_2 \mathbb{R}P^n \times \mathbb{R}P^m = \mathbb{S}^1 \times \mathbb{S}^1$ and hence $$H^*(X/G) \cong \mathbb{Z}_2[x,w]/\langle x^2, w^2 - \alpha xw\rangle,$$ where $deg(x)=1$, $deg(w)=1$ and $\alpha \in \mathbb{Z}_2$, which is consistent with \cite[Theorem 2 (iii)]{Dotzel2}.
\end{remark}
\bigskip

\noindent \textbf{Proof of Theorem 1.2.}\\
Since the proof is analogous to that of Theorem 1.1, we describe it rather briefly. Let $G= \mathbb{Z}_2$ act freely on a finitistic space $X \simeq_2 \mathbb{C}P^n \times \mathbb{C}P^m$, $1 \leq n \leq m$. By Proposition 4.4, the induced action on the cohomology is trivial. Note that $E_2^{k,l}= 0$ for $l$ odd. This gives $$d_2:E_2^{k,l} \to E_2^{k+2,l-1}$$ is zero and hence $E_3^{k,l}= E_2^{k,l}$ for all $k$, $l$. Let $a$, $b \in H^2(X)$ be generators of the cohomology algebra $H^*(X)$. Since the Leray spectral sequence does not degenerate, we have $d_3 \neq 0$ with one of the following:\\
(i) $d_3(1\otimes a)= t^3 \otimes 1$ and $d_3(1\otimes b)= 0$.\\
(ii) $d_3(1\otimes a) =0 $ and $d_3(1\otimes b) = t^3 \otimes 1$.\\
(iii) $d_3(1\otimes a) =t^3\otimes 1$ and $d_3(1\otimes b)=  t^3 \otimes 1$.\\
Again we proceed case by case.\\

\noindent \textbf{The case (i)}\\
Let $d_3(1\otimes a)= t^3 \otimes 1$ and $d_3(1\otimes b)= 0$, then
\begin{displaymath}
d_3(1 \otimes a^pb^q) = \left\{ \begin{array}{ll}
t^3 \otimes a^{p-1}b^q & \textrm{if $p$ is odd}\\
0 & \textrm{if $p$ is even.}\\
\end{array} \right.
\end{displaymath}
Note that $n$ must be odd. For various values of $l$ we consider the differentials $$d_3:E_3^{0,2l} \to E_3^{3,2l-2}$$ as in the proof of Theorem 1.1 and compute the ranks of $kerd_3$ and $im d_3$. Since $$E_4^{k,2l}= ker \{ d_3: E_3^{k,2l} \to E_3^{k+3, 2l-2} \}/ im\{d_3: E_3^{k-3, 2l+2} \to E_3^{k,2l}\},$$ it is clear that rk$(E_4^{k,2l})$ = 0 and hence $E_4^{k,2l}=0$ for all $k \geq 3$.\\
And  $E_4^{k,2l}= ker \{ d_3: E_3^{k,2l} \to E_3^{k+3, 2l-2} \}$ for $k$ = 0, 1, 2. Note that $E_4^{k,l}=0$ for all $k$ and for all odd $l$. Observe that $$d_r: E_r^{k,l} \to E_r^{k+r, l-r+1}$$
is zero for all $r \geq 4$ as $E_r^{k+r, l-r+1}=0$ for all $r \geq 4$. Hence $E_{\infty}^{*,*}= E_4^{*,*}$. Since $H^*(X_G) \cong$ Tot$E_{\infty}^{*,*}$, have $$H^p(X_G)\cong \bigoplus_{i+j=p}E_{\infty}^{i,j}= E_{\infty}^{0,p} \oplus E_{\infty}^{1,p-1} \oplus E_{\infty}^{2,p-2}$$ for all $0 \leq p \leq 2(n+m)$. Let $x= \rho^*(t) \in E_{\infty}^{1,0}$ be determined by $t \otimes 1 \in E_{2}^{1,0}$.
As $E_{\infty}^{3,0}=0$ we have  $x^3=0$. Note that $1 \otimes a^2 \in E_2^{0,4}$ is a permanent cocycle and determines an element $u \in E_{\infty}^{0,4}$. Choose $y \in H^4(X_G)$ such that $i^*(y)=a^2$. Then $y$ determines $u$ and satisfies $y^{\frac{n+1}{2}}=0$. Similarly, $1 \otimes b \in E_2^{0,2}$ is a permanent cocycle and gives an element $v \in E_{\infty}^{0,2}$. Again we choose $z \in H^2(X_G)$ such that $i^*(z)=b$. Then $z$ determines $v$ and $z^{m+1}=0$. Therefore $$H^*(X_G) \cong \mathbb{Z}_2[x,y,z]/\langle x^3, y^{\frac{n+1}{2}}, z^{m+1}\rangle,$$ where $deg(x)=1$, $deg(y)=4$ and $deg(z)=2$. As the action of $G$ is free, $H^*(X/G )\cong H^*(X_G)$, which gives Theorem 1.2 (1).\\

\noindent \textbf{The case (ii)}\\
The proof is similar to that of case (i) and we get Theorem 1.2 (2).\\

\noindent \textbf{The case (iii)}\\
Let $d_3(1\otimes a) =t^3 \otimes 1$ and $d_3(1\otimes b)=  t^3 \otimes 1$. One can see that 
\begin{displaymath}
d_3(1 \otimes a^pb^q) = \left\{ \begin{array}{ll}
t^3 \otimes a^{p-1}b^q +t^3 \otimes a^pb^{q-1} & \textrm{if $p$ and $q$ are odd}\\
t^3 \otimes a^{p-1}b^q & \textrm{if $p$ is odd and $q$ is even}\\
t^3 \otimes a^pb^{q-1} & \textrm{if $p$ is even and $q$ is odd}\\
0 & \textrm{if $p$ and $q$ are even.}\\
\end{array} \right.
\end{displaymath}
Again in this case both $n$ and $m$ are odd. As in case (i) we see that $E_4^{k,2l}=0$ for all $k \geq 3$ and $E_4^{k,2l}= ker \{ d_3: E_3^{k,2l} \to E_3^{k+3, 2l-2} \}$ for $k$ = 0, 1, 2.  Note that $E_4^{k,l}=0$ for all $k$ and for all odd $l$. Thus $d_r=0$ for all $r \geq 4$ and hence $E_{\infty}^{*,*}= E_4^{*,*}$. Again we have $$H^p(X_G)\cong \bigoplus_{i+j=p}E_{\infty}^{i,j}= E_{\infty}^{0,p} \oplus E_{\infty}^{1,p-1} \oplus E_{\infty}^{2,p-2}$$ for all $0 \leq p \leq 2(n+m)$. Let $x= \rho^*(t) \in E_{\infty}^{1,0}$ be determined by $t \otimes 1 \in E_{2}^{1,0}$. As $E_{\infty}^{3,0}=0$ we have  $x^3=0$. Note that $1 \otimes a^2 \in E_2^{0,4}$ is a permanent cocycle and gives an element $u \in E_{\infty}^{0,4}$. Let $y \in H^4(X_G)$ such that $i^*(y)=a^2$. Then $y$ determines $u$ and  $y^{\frac{n+1}{2}}=0$. Similarly, $1 \otimes b^2 \in E_2^{0,4}$ is a permanent cocycle and yields an element $v \in E_{\infty}^{0,4}$. Choose $z \in H^4(X_G)$ such that $i^*(z)=b^2$. Then $z$ determines $v$ and satisfies $z^{\frac{m+1}{2}}=0$. Also $1 \otimes (a+b) \in E_2^{0,2}$ is a permanent cocycle and gives an element $s \in E_{\infty}^{0,2}$. Choose $w \in H^2(X_G)$ such that $i^*(w)= a+b$. Then $w$ determines $s$. We note that $$w^2=\alpha x^2w + \beta y + \gamma z$$ for some $\alpha, \beta, \gamma \in \mathbb{Z}_2$. Hence $$H^*(X_G) \cong \mathbb{Z}_2[x,y,z,w]/\langle x^3, y^{\frac{n+1}{2}}, z^{\frac{m+1}{2}}, w^2 -\alpha x^2w - \beta y - \gamma z\rangle,$$ where $deg(x)=1$, $deg(y)=4$, $deg(z)=4$ and $deg(w)=2$. Since $H^*(X/G ) \cong H^*(X_G)$, we get Theorem 1.2 (3). \hfill $\Box$

\begin{corollary}
Let $G=\mathbb{Z}_2$ act freely on a finitistic space $X \simeq_2 \mathbb{C}P^n$, where $n \geq 1$ is odd. Then $$H^*(X/G) \cong \mathbb{Z}_2[x,y]/ \langle x^3, y^{\frac{n+1}{2}}\rangle,$$
where $deg(x)=1$ and $deg(y)=4$
\end{corollary}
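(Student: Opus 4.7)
The plan is to mimic the proof of Theorem 1.2 in the one-variable setting, exactly as Corollary 4.5 did for Corollary 4.7's real analogue. Since $H^*(X) \cong \mathbb{Z}_2[a]/\langle a^{n+1}\rangle$ with $\deg(a)=2$, the $E_2$-page of the Leray spectral sequence of $X \hookrightarrow X_G \longrightarrow B_G$ satisfies $E_2^{k,l} = H^k(B_G)\otimes H^l(X)$ (the induced action on $H^*(X)$ is trivial by the same Chang--Su / $c\cdot T^*(c)\neq 0$ argument as in Proposition 4.4, since an odd power of the generator would detect a fixed point). Because $H^l(X)=0$ for $l$ odd, $E_2^{k,l}=0$ for $l$ odd. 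Consequently $d_2:E_2^{k,l}\to E_2^{k+2,l-1}$ is identically zero, so $E_3=E_2$.

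Next, since $G$ acts freely and the cohomology of $X$ is finite, Proposition 3.5 forces the spectral sequence not to degenerate. The only possible first nonzero differential is $d_3$, and the only nonzero possibility is $d_3(1\otimes a)=t^3\otimes 1$ (both for dimension reasons and because a vanishing $d_3$ on $a$ would leave all differentials zero). The derivation property then gives
\[
d_3(1 \otimes a^p) = \begin{cases} t^3 \otimes a^{p-1} & \textrm{if $p$ is odd,}\\ 0 & \textrm{if $p$ is even.}\end{cases}
\]
Since $n$ is odd, $d_3(1\otimes a^{n+1})=0$, which is consistent with $a^{n+1}=0$.

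I would then compute $E_4$ by looking at ranks of $\ker d_3$ and $\operatorname{im} d_3$ on each horizontal line $l=2j$. This shows $E_4^{k,2j}=0$ for all $k\geq 3$, and $E_4^{k,2j}=\ker\{d_3:E_3^{k,2j}\to E_3^{k+3,2j-2}\}$ for $k=0,1,2$; together with $E_4^{k,l}=0$ for $l$ odd. All higher differentials $d_r$ ($r\geq 4$) vanish for dimensional reasons, so $E_\infty=E_4$.

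Finally I assemble the algebra structure: letting $x=\rho^*(t)\in E_\infty^{1,0}$ (determined by $t\otimes 1$) we obtain $x^3=0$ since $E_\infty^{3,0}=0$. The element $1\otimes a^2 \in E_2^{0,4}$ is a permanent cocycle, so I pick $y\in H^4(X_G)$ with $i^*(y)=a^2$; then $y$ determines the corresponding class in $E_\infty^{0,4}$ and satisfies $y^{(n+1)/2}=0$ (since $a^{n+1}=0$). Combining with $H^*(X/G)\cong H^*(X_G)$ (Proposition 3.4) yields the required presentation $\mathbb{Z}_2[x,y]/\langle x^3, y^{(n+1)/2}\rangle$. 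The only genuine subtlety is to make sure no extraneous multiplicative relation sneaks in; but a dimension-by-dimension rank count of $E_\infty$ versus the asserted algebra matches, so the presentation is complete.
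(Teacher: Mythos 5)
Your proposal is correct and follows essentially the same route as the paper, which simply sets $b=0$ in the proof of Theorem 1.2: $d_2$ vanishes since $H^{\mathrm{odd}}(X)=0$, non-degeneration forces $d_3(1\otimes a)=t^3\otimes 1$, the derivation property kills everything in filtration degree $\geq 3$, and the classes $x=\rho^*(t)$ and $y$ with $i^*(y)=a^2$ give the stated presentation. (The only cosmetic difference is that triviality of the induced action on $H^*(X)$ is automatic here, since each $H^{2i}(X)\cong\mathbb{Z}_2$ has no nontrivial automorphism, so the $cT^*(c)\neq 0$ argument is not needed.)
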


\begin{proof}
The proof follows by taking $b=0$ in the proof of Theorem 1.2. We have $d_2=0$ and $d_3(1 \otimes a) =t^3 \otimes 1$ and 
\begin{displaymath}
d_3(1 \otimes a^p) = \left\{ \begin{array}{ll}
t^3 \otimes a^{p-1} & \textrm{if $p$ is odd}\\
0 & \textrm{if $p$ is even.}\\
\end{array} \right.
\end{displaymath}
This gives $$d_3:E_3^{k,2l} \to E_3^{k+3,2l-2}$$ is an isomorphism for $l$ odd and zero for $l$ even. Hence $E_4^{k,l}=0$ for all $k \geq 3$ and $$d_r: E_r^{k,l} \to E_r^{k+r, l-r+1}$$ is zero for all $r \geq 4$ as $E_r^{k+r, l-r+1}=0$ for all $r \geq 4$. Imitating the above proof we get $$H^*(X/G; \mathbb{Z}_2) \cong \mathbb{Z}_2[x,y]/ \langle x^3, y^{\frac{n+1}{2}}\rangle,$$
where $deg(x)=1$ and $deg(y)=4$.
\end{proof}

\begin{remark}
Taking $n=m=1$, we have $X \simeq_2 \mathbb{C}P^n \times \mathbb{C}P^m = \mathbb{S}^2 \times \mathbb{S}^2$ and hence $$H^*(X/G) \cong \mathbb{Z}_2[x,w]/\langle x^3, w^2 - \alpha x^2w\rangle,$$ where $deg(x)=1$, $deg(w)=2$ and $\alpha \in \mathbb{Z}_2$, which is consistent with \cite[Theorem 2 (iii)]{Dotzel2}.
\end{remark}

\begin{example}
For an odd integer $n$, take a free involution on $\mathbb{R}P^n$ and for any positive integer $m$ take the trivial action on $\mathbb{R}P^m$. Then the diagonal action on $X= \mathbb{R}P^n \times \mathbb{R}P^m$ is a free involution.
By definition $\mathbb{R}P^n= \mathbb{S}^n/{\mathbb{Z}_2}$, where $\mathbb{S}^n$ is equipped with the antipodal action of $\mathbb{Z}_2$. Now given the free involution on $\mathbb{R}P^n$, lifting the action gives a free and orthogonal action of a group $H$ of order 4 on $\mathbb{S}^n$. It is well known that $\mathbb{Z}_2 \oplus\mathbb{Z}_2$ cannot act freely on $\mathbb{S}^n$ \cite[Chapter III, Theorem 8.1]{Bredon2}. Hence $H$ must be the cyclic group of order 4 acting freely and orthogonally on $\mathbb{S}^n$ and therefore $\mathbb{R}P^n/ \mathbb{Z}_2 = \mathbb{S}^n/H = L^n(4,1)$, the $n$-dimensional Lens space whose cohomology algebra is known to be $$\mathbb{Z}_2[x,y]/\langle x^2, y^{\frac{n+1}{2}} \rangle,$$
where  $deg(x)=1$ and $deg(y)=2$. Hence the cohomology algebra of the orbit space $X/ \mathbb{Z}_2 = L^n(4,1) \times \mathbb{R}P^m$ is $$\mathbb{Z}_2[x,y,z]/\langle x^2, y^{\frac{n+1}{2}}, z^{m+1} \rangle,$$ where  $deg(x)=1$, $deg(y)=2$ and $deg(z)=1$. This realizes Theorem 1.1(1). Interchanging $n$ and $m$ realizes Theorem 1.1(2).
\end{example}

\begin{example}
For the complex case, if we take $n=1$, then $\mathbb{C}P^1=\mathbb{S}^2$. The orbit space of any free involution on $\mathbb{S}^2$ is $\mathbb{R}P^2$, whose cohomology algebra is $\mathbb{Z}_2[x]/\langle x^3 \rangle$. For any positive integer $m$, the trivial action on $\mathbb{C}P^m$ gives a free involution on $X=\mathbb{C}P^1 \times \mathbb{C}P^m$
whose orbit space is $X/ \mathbb{Z}_2 =\mathbb{R}P^2 \times \mathbb{C}P^m$ and has cohomology algebra $$\mathbb{Z}_2[x,z]/\langle x^3, z^{m+1} \rangle,$$ where  $deg(x)=1$ and $deg(z)=2$. This realizes Theorem 1.2 (1). Similarly, one can realize Theorem 1.2(2).
\end{example}

\section{Application to equivariant maps}
Let $\mathbb{S}^k$ be the unit $k$-sphere equipped with the antipodal involution and $X$ be a paracompact Hausdorff space with a fixed free involution. We give an application of our results to non-existence of $\mathbb{Z}_2$-equivariant maps from $\mathbb{S}^k \to X$. Conner and Floyd defined the index of the involution on $X$ as $$\textrm{ind}(X) = max ~ \{~ k ~|~ \textrm{there exist a}~ \mathbb{Z}_2 \textrm{-equivariant map}~ \mathbb{S}^k \to X \}.$$ It is natural to consider a purely cohomological criteria to study the above invariant. The best known and most easily managed cohomology classes are the characteristic classes with coefficients in $\mathbb{Z}_2$. Generalizing the Yang's index \cite{Yang2}, Conner and Floyd defined
$$\textrm{{co-ind}}_{\mathbb{Z}_2}(X)~ = ~\textrm{largest integer}~k~ \textrm{such that}~ w^k \neq 0,$$
where $w \in H^1(X/G; \mathbb{Z}_2 )$ is the Whitney class of the principal $G$-bundle $$X \to X/G.$$ Since $\textrm{{co-ind}}_{ \mathbb{Z}_2}(\mathbb{S}^k)$ = $k$, by \cite[(4.5)]{Conner}, we have
$$\textrm{ind}(X)\leq \textrm{{co-ind}}_{\mathbb{Z}_2}(X).$$
Also, since $X$ is paracompact Hausdorff, we can take a classifying map $$c : X/G \to B_G$$ for the principal $G$-bundle $X \to X/G$. If $h: X/G \to X_G$ is a homotopy equivalence, then $\rho h : X/G \to B_G$ also classifies the principal $G$-bundle $X \to X/G$ and hence it is homotopic to $c$. Therefore it suffices to consider the map $$\rho^*: H^1(B_G) \to H^1(X_G).$$ The image of the Whitney class of the universal principal $G$-bundle $G \hookrightarrow E_G \longrightarrow B_G$ is the Whitney class of $X \to X/G$.

Now, when $X \simeq_2 \mathbb{R}P^n \times \mathbb{R}P^m$ is a finitistic space, from the proof of Theorem 1.1, we have that $x \in H^1(X/G)$ is the Whitney class with $x \neq 0$ and $x^2=0$. This gives $\textrm{{co-ind}}_{\mathbb{Z}_2}(X) = 1$ and $\textrm{ind}(X)\leq 1$. Hence there is no $\mathbb{Z}_2$-equivariant map from $\mathbb{S}^k \to X$ for $k \geq 2$.

Similarly, when $X \simeq_2 \mathbb{C}P^n \times \mathbb{C}P^m$ is a finitistic space, from the proof of Theorem 1.2, $x \in H^1(X/G)$ is the Whitney class with $x^2 \neq 0$ and $x^3=0$. This gives $\textrm{{co-ind}}_{\mathbb{Z}_2}(X) = 2$ and $\textrm{ind}(X)\leq 2$. Hence there is no $\mathbb{Z}_2$-equivariant map from $\mathbb{S}^k \to X$ for $k \geq 3$.

\subsection*{Acknowledgment}
The author thanks the referee for many valuable suggestions which improved the presentation and exposition of the paper.

\bibliographystyle{amsplain}

\end{document}